\newcommand{\1}{{\bf 1}}
\newcommand{\C}{{\mathbb C}}
\newcommand{\Z}{{\mathbb Z}}
\newcommand{\w}{\omega}
\newcommand{\al}{\alpha}
\DeclareMathOperator{\End}{End}
\DeclareMathOperator{\wt}{wt}
\def\de{\delta}
\def \<{\langle}
\def \>{\rangle}
\newtheorem{thm}{Theorem}[section]
\newtheorem{prop}[thm]{Proposition}
\newtheorem{lem}[thm]{Lemma}
\newtheorem{cor}[thm]{Corollary}
\newtheorem{rmk}[thm]{Remark}
\newtheorem{defn}[thm]{Definition}
\begin{document}

 \begin{center}
	{{\bf A class of vertex operator algebras generated by Virasoro vectors}\footnote{Supported by China NSF grant No.12171312}}
\end{center}

\begin{center}
	{Runkang Feng
		\\
		School of Mathematical  Sciences, Shanghai Jiao Tong University\\
		Shanghai 200240, China}
\end{center}

\let\thefootnote\relax\footnotetext{2010 Mathematics Subject Classification. 17B69.   \\
	Key words. Vertex operator algebra, Virasoro vector, Positivity}

\begin{abstract}
In this paper, we study a class of simple  OZ-type vertex operator algebras $V$ generated by simple Virasoro vectors $\omega^{ij}=\omega^{ji}$, $1\leq i<j\leq n$, $n\geq 3$. We prove that  $V$ is uniquely determined by its Griess algebra $V_2$. The automorphism group of $V$ is also determined. Furthermore, we give the necessary conditions for $V$ to be unitary. 
\end{abstract}


\section{Introduction}
Let $V$ be a vertex operator algebra. An element $e\in V_2$ is called a Virasoro vector of $V$ if the vertex operator subalgebra $\<e\>$ generated by $e$ is isomorphic to a Virasoro vertex operator algebra
with central charge $c$. If $\<e\>\cong L(c,0)$ is simple, $e$ is called a simple Virasoro vector of $V$. If $\<e\>$ is isomorphic to the simple Virasoro algebra $L(1/2,0)$ with central charge $c=1/2$, $e$ is called an Ising vector of $V$.  If further $V$ as an $\<e\>$-module,  $V$ is a direct sum of $\<e\>$-modules isomorphic to $L(1/2,0)$ or $L(1/2, 1/2)$, $e$ is called an Ising vector of $\sigma$-type \cite{Mi96}. Vertex operator algebras generated by Ising vectors of $\sigma$-type are interesting not only from the view point of VOA theory, but also from the point of view of finite group theory since each Ising vector defines an involution, usually called Miyamoto involution \cite{Mi96}. If all Ising vectors are of $\sigma$-type, the group generated by
Miyamoto involutions forms a 3-transposition group.
When the VOAs have compact real forms, the 3-transposition groups obtained in this manner are of symplectic type \cite{CH} and are completely classified in \cite{Ma}.
In \cite{Ma}, a complete list of 3-transposition groups generated by Miyamoto involutions associated with Ising vectors of $\sigma$-type as well as examples of VOAs realizing those groups is presented. A vertex operator algebra is called of CFT type, if $V=\oplus_{n=0}^{\infty}V_n$ and $V_0=\mathbb{C}{\mathbf{1}}$. If further $V_1=0$, then $V$ is called a  moonshine type or OZ-type vertex operator algebra. OZ type vertex operator algebras generated by Ising vectors of $\sigma$-type have been extensively studied in \cite{LS08}, \cite{JL16}, \cite{JLY19}, \cite{JLY23}. Complete classification and characterization of OZ-type vertex operator algebras were established in \cite{JLY23}. Matsuo's  classification of the center-free  3-transposition groups of symplectic type realizable by an OZ-type VOA generated by Ising vectors of $\sigma$-type without the assumption that the VOA carries a positive-definite Hermitian form was also accomplished in \cite{JLY23}. Stimulated by the above works, we study in this paper a class of simple  OZ-type vertex operator algebras $V$ generated by simple Virasoro vectors $\omega^{ij}=\omega^{ji}$, $1\leq i<j\leq n$, of central charge $c_m$ satisfying conditions similar to those of  Ising vectors of $\sigma$-type, where $c_m=1-\frac{6}{(m+2)(m+3)}$, $m\in\Z_{\geq 2}$.  Concretely, we assume that for distinct $1\leq i,j,k,l\leq n$, 
\begin{equation}\label{e1-1}
\omega^{ij}_1\omega^{jl}=\frac{h^{(m)}_{m+1,1}}{2}(\omega^{ij}+\omega^{jl}-\omega^{il}),  
\end{equation}
and 
\begin{equation}\label{e1-2}
\omega^{ij}_p\omega^{kl}=0, \ p\geq 0, \  \omega^{ij}_3\omega^{ij}=\frac{c_{m}}{2}{\bf 1}, \ \   
\omega^{ij}_3\omega^{jl}=\frac{c_mh_{m+1,1}^{(m)}}{8}{\bf 1},
\end{equation}
where for $1\leq r\leq m+1, 1\leq s\leq m+2$, $h^{(m)}_{r,s}=\frac{[r(m+3)-s(m+2)]^2-1}{4(m+2)(m+3)}$.  Notice that 	if  $m=1$,  $V$ is the  class of vertex operator algebras   generated by  Ising vectors of $\sigma$-type  studied in \cite{JLY19},  \cite{JL16}, \cite{LS08}, \cite{LSY07},  \cite{LY2}, \cite{Ma}, etc. If $m=2$, $n=3$, there is an explicit realization of $V$ in \cite{LS08}. For general  $m\geq 2$, $n\geq 4$, it seems that no explicit realizations of $V$ have been given up to now. It would be interesting to give concrete examples of $V$ or to study whether such $V$ exists or not for $m\geq 2$, $n\geq 4$.  Another  interesting direction  is  the 
 study of structures of  such a class of vertex operator algebras $V$.  In this paper, we focus on the second problem. 
We  first give a linear spanning set of $V$(Theorem \ref{thm1}), and prove that $V$ is uniquely determined by the structure of its Griess algebra $V_2$ (Theorem \ref{thm2}). We next give the automorphism group of $V$  by Theorem \ref{thm4}. Finally,  we give  the necessary condition for $V$ to be unitary. 

The paper is organized as follows. In Section 2, we recall some basic concepts and facts on vertex operator algebras. Section 3 is dedicated to the structure of the vertex operator algebras $V$ we study. In Section 4, we give the  necessary conditions for $V$ to be unitary.

\section{Preliminaries}
\def\theequation{2.\arabic{equation}}
\setcounter{equation}{0}

In this section, we review notions of vertex operator algebras, notions of modules, and some basic facts \cite{FF84},  \cite{Bor86}, \cite{DLM98},
\cite{FLM88}, \cite{FZ92}, \cite{LL04}, \cite{Z96}, \cite{KPP08}, \cite{DL14}.  We also recall intertwining
operators,  fusion rules and some consequences following  \cite{ADL05}, \cite{Li99}, \cite{DMZ94}, \cite{FHL93},  \cite{LS08}, \cite{M98}, \cite{W93}.
\begin{defn}
	A {\em vertex operator algebra} $(V, {\bf 1}, \omega, Y)$ is a $\Z$-graded vector space
	$V=\bigoplus_{n\in \Z} V_n$ such that $\dim V_n<\infty$, and $V_n=0$ for $n<<0$, equipped with a linear map $$Y:V\to(\End
	V)[[z,z^{-1}]],\,a\mapsto Y(a,z)=\sum_{n\in \Z}a_nz^{-n-1}$$
	for
	$a\in V$ such that
	
	(1) For $u,v\in V$,
	$u_nv=0$,  for $n \gg 0.$
	
	(2) There are two
	distinguished vectors, the {\it vacuum vector} $\1\in V_0$ and the
	{\it conformal vector} $\omega \in V_2$ such that 
	$$Y(\1,z)={\rm	id}_{V}, \ \lim_{z\rightarrow 0}Y(u,z){\1}=u, \ {\rm for} \ u\in V$$
	and
	$$
	[L(m),L(n)]=(m-n)L(m+n)+\delta_{m+n,0}\frac{m^3-m}{12}c,
	$$
	where
	$Y(\w,z)=\sum_{n\in\Z}L(n)z^{-n-2}$, $c\in{\C}$ is called the central charge of $V$.
	
	(3) $L(0)_{V_n}=n{\rm id}_{V_n}  $.
	
	(4) The Jacobi identity holds:
	\begin{eqnarray}
	& &z_0^{-1}\de \left({z_1 - z_2 \over
		z_0}\right)Y(u,z_1)Y(v,z_2)-
	z_0^{-1} \de \left({z_2- z_1 \over -z_0}\right)Y(v,z_2)Y(u,z_1) \nonumber \\
	& &\ \ \ \ \ \ \ \ \ \ =z_2^{-1} \de \left({z_1- z_0 \over
		z_2}\right)Y(Y(u,z_0)v,z_2).
	\end{eqnarray}
\end{defn}

\begin{defn} A weak $V$-module is a vector space $M$ equipped
	with a linear map
	$$
	\begin{array}{ll}
	Y_M: & V \rightarrow ({\rm End}M)[[z,z^{-1}]]\\
	& v \mapsto Y_M(v,z)=\sum_{n \in \Z}v_n z^{-n-1},\ \ v_n \in {\rm End}M
	\end{array}
	$$
	satisfying the following:
	
	(1) $v_nw=0$ for $n \gg 0$ where $v \in V$ and $w \in M$,
	
	(2) $Y_M( {\textbf 1},z)={\rm id}_M$,
	
	(3) The Jacobi identity holds:
	\begin{eqnarray}
	& &z_0^{-1}\de \left({z_1 - z_2 \over
		z_0}\right)Y_M(u,z_1)Y_M(v,z_2)-
	z_0^{-1} \de \left({z_2- z_1 \over -z_0}\right)Y_M(v,z_2)Y_M(u,z_1) \nonumber \\
	& &\ \ \ \ \ \ \ \ \ \ =z_2^{-1} \de \left({z_1- z_0 \over
		z_2}\right)Y_M(Y(u,z_0)v,z_2).
	\end{eqnarray}
\end{defn}


\begin{defn}
	An admissible $V$-module is a weak $V$ module  which carries a
	$\Z_+$-grading $M=\bigoplus_{n \in \Z_+} M(n)$, such that if $v \in
	V_r$ then $v_m M(n) \subseteq M(n+r-m-1).$
\end{defn}

\begin{defn}
	An ordinary $V$-module is a weak $V$ module which carries a
	$\C$-grading $M=\bigoplus_{\l \in \C} M_{\l}$, such that:
	
	(1) $\dim(M_{\l})< \infty,$
	
	(2) $M_{\l+n}=0$ for fixed $\l$ and $n \ll 0,$
	
	(3) $L(0)w=\l w=\wt(w) w$ for $w \in M_{\l}$ where $L(0)$ is the
	component operator of $Y_M(\omega,z)=\sum_{n\in\Z}L(n)z^{-n-2}.$
\end{defn}

\begin{rmk} \ It is easy to see that an ordinary $V$-module is an admissible one. If $W$  is an
	ordinary $V$-module, we simply call $W$ a $V$-module.
\end{rmk}

We call a vertex operator algebra rational if the admissible module
category is semisimple. We have the following result from
\cite{DLM98} (also see \cite{Z96}).

\begin{thm}\label{tt2.1}
	If $V$ is a  rational vertex operator algebra, then $V$ has finitely
	many irreducible admissible modules up to isomorphism and every
	irreducible admissible $V$-module is ordinary.
\end{thm}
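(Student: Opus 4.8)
The plan is to reduce both assertions to the representation theory of Zhu's associative algebra $A(V)=V/O(V)$. Recall the two functors linking admissible $V$-modules to $A(V)$-modules: the top-level functor $\Omega$ sending an admissible module $M=\bigoplus_{n\geq 0}M(n)$ to its lowest graded piece $M(0)$, on which Zhu's construction puts a left $A(V)$-module structure via $[a]\cdot w=o(a)w=a_{\wt a-1}w$ for homogeneous $a$; and the generalized Verma (induced) functor sending an $A(V)$-module $U$ to an admissible $V$-module $\bar M(U)$ with $\bar M(U)(0)\cong U$, universal among admissible modules with prescribed top. The cornerstone, established by Zhu and extended in the cited references, is that $\Omega$ and this induction induce mutually inverse bijections between isomorphism classes of irreducible admissible $V$-modules and of irreducible $A(V)$-modules. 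Granting this, the module theory of $V$ is controlled by that of $A(V)$, and I would exploit exactly this.

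First I would show that rationality forces $A(V)$ to be a finite-dimensional semisimple associative algebra. Semisimplicity is the formal half: given any $A(V)$-module $U$, form $\bar M(U)$; since $V$ is rational, $\bar M(U)$ is a direct sum of irreducible admissible modules, and applying $\Omega$, which carries such a direct sum to the direct sum of their irreducible tops, shows that $U\cong\bar M(U)(0)$ is completely reducible as an $A(V)$-module. Hence every $A(V)$-module is completely reducible, so $A(V)$ is semisimple. The finite-dimensionality is the genuinely delicate point: one must show $A(V)$ has only finitely many inequivalent irreducibles and that each is finite-dimensional, whence semisimplicity pins $A(V)$ down as a finite product of matrix algebras. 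With $A(V)$ finite-dimensional and semisimple it has finitely many irreducible modules, and the bijection above transports this to finitely many irreducible admissible $V$-modules up to isomorphism, giving the first assertion.

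For the second assertion, let $M=\bigoplus_{n\geq 0}M(n)$ be an irreducible admissible module. Its top $M(0)$ is an irreducible $A(V)$-module, hence finite-dimensional since $A(V)$ is. The class $[\om]$ is central in $A(V)$ and acts on $M(0)$ as $o(\om)=L(0)$, so by Schur's lemma it acts as a single scalar $\lambda\in\C$. I would then propagate this eigenvalue: the grading axiom $v_mM(n)\subseteq M(n+\wt v-m-1)$ together with the commutator $[L(0),v_m]=(\wt v-m-1)v_m$ forces $L(0)$ to act on $M(n)$ as $\lambda+n$, since $M$ is generated by $M(0)$. Setting $M_{\lambda+n}:=M(n)$ yields a $\C$-grading by $L(0)$-eigenvalues that is bounded below and on which $L(0)$ acts as the weight, so the only remaining axiom to verify is $\dim M(n)<\infty$.

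Finite-dimensionality of the graded pieces is where I expect the main work. As $M$ is irreducible it is a quotient of $\bar M(M(0))$, so it suffices to bound $\dim\bar M(M(0))(n)$. Using a PBW-type spanning of the induced module together with the standing hypothesis $\dim V_r<\infty$, one shows that each $\bar M(U)(n)$ is finite-dimensional whenever $\dim U<\infty$; the combinatorial heart is reordering the modes $v_m$ via the Jacobi and commutator identities so that only finitely many monomials contribute to a fixed weight. Granting this, $\dim M(n)\leq\dim\bar M(M(0))(n)<\infty$ and $M$ satisfies every axiom of an ordinary module. In summary, the transfer through $A(V)$ is routine; the two substantive obstacles are the finite-dimensionality of $A(V)$ itself and of the graded pieces of induced modules, both of which ultimately rest on the local finiteness $\dim V_r<\infty$ built into the vertex operator algebra axioms.
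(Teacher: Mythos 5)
First, a point of reference: the paper does not prove Theorem \ref{tt2.1} at all --- it is quoted as a background result from \cite{DLM98} (see also \cite{Z96}) --- so your proposal must be judged against the arguments in those sources, and your high-level strategy (transporting everything through Zhu's algebra via the bijection between irreducible admissible $V$-modules and irreducible $A(V)$-modules) is exactly theirs. Your deduction of semisimplicity of $A(V)$ from rationality is sound, and the first ``delicate point'' you leave open actually closes itself: once every $A(V)$-module is completely reducible, Wedderburn--Artin makes $A(V)$ a finite product of matrix algebras over division algebras, and since $A(V)$ is a quotient of $V=\bigoplus_n V_n$ it has countable dimension over $\C$, which forces each division algebra to equal $\C$ (an element not in $\C$ would be transcendental and the uncountable family $(d-a)^{-1}$, $a\in\C$, would be linearly independent). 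Hence $A(V)$ is automatically finite-dimensional semisimple and the finiteness of the set of irreducible admissible modules follows. Your propagation of the $L(0)$-eigenvalue via Schur's lemma and $[L(0),v_m]=(\wt v-m-1)v_m$ is also fine.

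The genuine gap is the final step, where you claim that $\dim \bar M(U)(n)<\infty$ for finite-dimensional $U$ follows from ``a PBW-type spanning together with $\dim V_r<\infty$.'' That step fails: after reordering, the degree-$n$ piece of the induced module is spanned by monomials $a^1_{m_1}\cdots a^k_{m_k}u$ in which each factor raises degree by $d_i=\wt a^i-m_i-1\geq 1$ with $\sum_i d_i=n$, but for each fixed $d_i$ the operators of that degree are $a_{\wt a-1-d_i}$ with $a$ ranging over the whole infinite-dimensional space $V$, so the spanning set in every positive degree is infinite and no bound comes out; indeed for non-rational $V$ the graded pieces of $\bar M(U)$ can genuinely be infinite-dimensional. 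Any correct argument must invoke rationality a second time at exactly this point. The standard repair (carried out in \cite{DLM98} and the companion work on the higher Zhu algebras $A_n(V)$) is to show that rationality makes each $A_n(V)$ semisimple by the same generalized-Verma argument you used for $A_0(V)=A(V)$, hence finite-dimensional by the countability argument above, and that for an irreducible admissible $M$ each nonzero $M(n)$ is an (irreducible) $A_n(V)$-module; this is what yields $\dim M(n)<\infty$ and completes the proof that $M$ is ordinary. As written, your proposal assumes away precisely the hard half of the second assertion.
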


Recall from \cite{Z96} that a vertex operator algebra is called $C_2$-cofinite if $C_2(V)$ has finite codimension where $C_2(V)=\langle u_{-2}v|u,v\in V\rangle$.

\begin{rmk} If $V$ is a vertex operator algebra satisfying $C_{2}$-cofinite
	property, $V$ has only finitely many irreducible admissible modules
	up to isomorphism \cite{DLM98}, \cite{Li99}, \cite{Z96}.
\end{rmk}
\begin{defn}
	Let $V$ be a vertex operator algebra. $V$ is called regular if every weak $V$-module is a direct sum of ordinary $V$-modules. $V$ is called a moonshine-type or OZ-type  vertex operator algebra if $V=\bigoplus_{n\in\Z_{\geq 0}}V_n$ such that $V_0=\C\1$ and $V_1=0$.
\end{defn}

Let $(V,{\bf 1},\omega,Y)$ be a vertex operator algebra. A vertex operator subalgebra $(U,{\bf 1},\omega',Y)$  of $V$ is a graded subspace $U$ of $V$ which has  a vertex operator algebra structure  such that the operations  agree with the restriction of
those of $V$ and that $U$ shares the same vacuum vector with $V$. If the conformal vector $\omega'$ of $U$ is equal to $\omega$, we call $U$ a conformal vertex operator subalgebra of $V$.

Let $(U,{\bf 1},\omega',Y)$ be a vertex operator subalgebra of $(V,{\bf 1},\omega,Y)$. Set
$$
C_{V}(U)=\{v\in V \ | \ [Y(u,z_1),Y(v,z_2)]=0, \ \forall u\in U\}.
$$
$C_V(U)$ is called the commutant of $U$ in $V$. We have the following result from \cite{FZ92} and \cite{LL04}.
\begin{lem} Let $(V,{\bf 1},\omega,Y)$ be a vertex operator algebra such that $V_n=0$ for $n<0$ and $V_0=\C{\bf 1}$. Let $(U,{\bf 1},\omega',Y)$ be a vertex operator subalgebra of $V$. Assume that
	$$
	\omega'\in U\cap V_2, \ L(1)\omega'=0.
	$$
	Denote $\omega''=\omega-\omega'$. Then $(C_V(U),{\bf 1},\omega'',Y)$ is a vertex operator subalgebra of $V$ of central charge $c_V-c_U$.
\end{lem}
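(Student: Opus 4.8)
The plan is to check, in order, that $C_V(U)$ is closed under all products and contains $\1$, that it is a graded subspace, that $\omega''=\omega-\omega'$ lies in $C_V(U)$ and is a Virasoro vector of central charge $c_V-c_U$, and finally that $\omega''$ induces the correct grading on $C_V(U)$. Throughout I write $L'(n),L''(n)$ for the modes of $\omega',\omega''$, so $Y(\omega',z)=\sum_n L'(n)z^{-n-2}$ and $Y(\omega'',z)=\sum_n L''(n)z^{-n-2}$.

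First I would record the reformulation $C_V(U)=\{v\in V\mid u_iv=0\text{ for all }u\in U,\ i\ge 0\}$, which follows from the commutator formula $[u_m,v_n]=\sum_{i\ge0}\binom{m}{i}(u_iv)_{m+n-i}$. Closure is then immediate: for $v,w\in C_V(U)$ and $u\in U$ one has $u_m(v_nw)=v_n(u_mw)+\sum_{i\ge0}\binom{m}{i}(u_iv)_{m+n-i}w$, and both terms vanish when $m\ge0$, so $v_nw\in C_V(U)$. Since $u_i\1=0$ for $i\ge0$ we get $\1\in C_V(U)$, and because each condition $u_iv=0$ is homogeneous (the operator $u_i$ shifts $L(0)$-weight by a fixed amount), $C_V(U)$ is a graded subspace of $V$. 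This part is routine.

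The crux is to exhibit $\omega''$ as a Virasoro vector commuting with $\omega'$. The enabling computation, where the hypothesis $L(1)\omega'=0$ is used, is $\omega_3\omega'=\tfrac{c_U}{2}\1$: writing $\omega_3\omega'=\lambda\1$ and evaluating $\omega_3(\omega'_1\omega')$ two ways — once via $\omega'_1\omega'=2\omega'$, giving $2\lambda\1$, and once via $[\omega_3,\omega'_1]=2L'(2)$ (the term $\omega_2\omega'=L(1)\omega'=0$ is exactly what drops out), giving $2L'(2)\omega'=c_U\1$ — forces $\lambda=\tfrac{c_U}{2}$. Combining this with skew-symmetry and the weight bound $V_{<0}=0$ (which makes the relevant products land in $V_0=\C\1$ or vanish), I can then verify $\omega'_i\omega''=0$ for every $i\ge0$: for instance $\omega'_0\omega''=0$ uses $L(-1)\omega'=L'(-1)\omega'$, $\omega'_1\omega''=0$ uses $\omega'_1\omega=2\omega'$, and $\omega'_3\omega''=(\omega_3\omega'-\tfrac{c_U}{2}\1)=0$ uses the identity just proved. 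By the commutator formula this yields $[L'(m),L''(n)]=0$, so the modes of $\omega'$ and $\omega''$ generate commuting Virasoro algebras; expanding $[L''(m),L''(n)]=[L(m),L(n)]-[L'(m),L'(n)]$ then shows $\omega''$ is a Virasoro vector of central charge $c_V-c_U$.

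The main obstacle is upgrading ``$\omega''$ commutes with $\omega'$'' to ``$\omega''$ commutes with all of $U$,'' i.e. proving $L''(k)u=0$ for every $u\in U$ and $k\ge-1$ (equivalently $\omega''_iu=0$ for $i\ge0$). The cases $k=-1,0$ are the coincidence on $U$ of the translation and grading operators, $L(-1)=L'(-1)$ and $L(0)=L'(0)$. For $k\ge1$ I would set $T=\{u\in U\mid L''(k)u=0\ \forall k\ge-1\}$; since $\omega''_ia=L''(i-1)a$, the product formula $L''(k)(a_nv)=a_n(L''(k)v)+\sum_{i\ge1}\binom{k+1}{i}(L''(i-1)a)_{k+1+n-i}v$ shows that $T$ is a vertex subalgebra of $U$ containing $\1$ and $\omega'$, and it remains to prove $T=U$ by induction on conformal weight, the key inputs being $[L'(0),L''(k)]=0$ and $L'(0)\,L''(k)u=(\wt u)\,L''(k)u$, which pin the weight of $L''(k)u$. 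Controlling the generators of $U$ in this induction is the delicate point, and is precisely the content of the commutant results of \cite{FZ92}, \cite{LL04}. Once $\omega''\in C_V(U)$ is established the proof concludes quickly: for $v\in C_V(U)$ we have $\omega'_0v=\omega'_1v=0$, hence $L''(-1)=L(-1)$ and $L''(0)=L(0)$ on $C_V(U)$, so $\omega''$ grades $C_V(U)$ with finite-dimensional homogeneous pieces that are bounded below, making it the conformal vector of $C_V(U)$ of central charge $c_V-c_U$.
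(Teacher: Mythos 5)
The paper does not actually prove this lemma --- it is quoted verbatim from \cite{FZ92} and \cite{LL04} --- so your proposal can only be judged on its own terms. Most of it checks out: the reformulation of $C_V(U)$ via the commutator formula, closure under products, the computation $\omega_3\omega'=\tfrac{c_U}{2}\1$ from $L(1)\omega'=0$, the verification $\omega'_i\omega''=0$ for $i\ge 0$, the central charge $c_V-c_U$, and the identification $L''(-1)=L(-1)$, $L''(0)=L(0)$ on $C_V(U)$ are all correct and are exactly the computations the cited sources carry out.

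The genuine gap is the step you yourself flag as delicate: showing $\omega''\in C_V(U)$, i.e.\ $L''(k)u=0$ for all $u\in U$ and $k\ge 1$. The induction you sketch does not close. Knowing $[L'(0),L''(k)]=0$ only tells you that $w=L''(k)u$ satisfies $L'(0)w=(\wt u)\,w$ while $L(0)w=(\wt u-k)w$, i.e.\ $L''(0)w=-kw$; nothing in the hypotheses forbids $L''(0)$ from having negative eigenvalues on $V$, so the weight bookkeeping is consistent with $w\ne 0$, and ``controlling the generators of $U$'' cannot help since $U$ is arbitrary. The standard way to finish (this is the actual content of Theorem 5.1 of \cite{FZ92} and Theorem 3.11.12 of \cite{LL04}) is to prove the sharper statement $C_V(U)=\{v\in V\mid \omega'_0v=0\}$. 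Indeed, for $u\in U$ one has $\omega'_0u=u_{-2}\1=L(-1)u$, hence $[\omega'_0,Y(u,z)]=\tfrac{d}{dz}Y(u,z)$ on $V$; if $\omega'_0v=0$ this gives
$$\frac{d}{dz}\left(e^{-z\omega'_0}Y(u,z)v\right)=0,$$
so $Y(u,z)v=e^{z\omega'_0}w$ for some $w\in V$ and therefore $u_nv=0$ for all $n\ge 0$. With this characterization, $\omega''\in C_V(U)$ reduces to the single identity $\omega'_0\omega''=0$, which you already proved. Replacing your induction by this translation-operator argument makes the proof complete.
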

Let $V$ be a vertex operator algebra.  An associative algebra $A(V)$
has been introduced and studied in \cite{Z96}. It turns out that $A(V)$ is
very useful in representation theory for vertex
operator algebras. One can use $A(V)$ not only to classify the
irreducible admissible modules \cite{Z96}, but also to compute the
fusion rules using $A(V)$-bimodules \cite{FZ92}, \cite{Li99}. We will  review
the definition of $A(V)$ and some properties of $A(V)$
from \cite{Z96}, \cite{FZ92}.

As a vector space, $A(V)$ is a quotient space of $V$ by $O(V)$,
where $O(V)$ denotes the linear span of elements
\begin{equation}\label{1.1}
u\circ v={\rm Res}_z(Y(u,z)\frac{(z+1)^{{\rm wt}\, u}}{z^2}v)
=\sum_{i\geq 0}{{\rm wt}\,u\choose i} u_{i-2}v
\end{equation}
for $u,v\in V$ with $u$ being homogeneous. Product in $A(V)$ is
induced from the multiplication
\begin{equation}\label{1.2}
u*v={\rm Res}_z(Y(u,z)\frac{(z+1)^{{\rm wt}\,u}}{z}v)
=\sum_{i\geq 0}{{\rm wt}\,u\choose i}u_{i-1}v
\end{equation}
for $u,v\in V$ with $u$ being homogeneous.  $A(V)=V/O(V)$ is an
associative algebra with identity ${\bf 1}+O(V)$ and with
$\omega+O(V)$ being in the center of $A(V)$. The most important
result about $A(V)$ is that for any admissible $V$-module
$M=\oplus_{n\geq 0}M(n)$ with $M(0)\ne 0,$ $M(0)$ is an
$A(V)$-module such that $v+O(V)$ acts as $o(v)$ where $o(v)=v_{wt
	v-1}$ for homogeneous $v.$

We now recall the  notion of intertwining operators and fusion
rules from \cite{FHL93}.
\begin{defn}
	Let $V$ be a vertex operator algebra and $M^1$, $M^2$, $M^3$ be weak $V$-modules. An intertwining
	operator $\mathcal {Y}( \cdot , z)$ of type $\left(\begin{tabular}{c}
	$M^3$\\
	$M^1$ $M^2$\\
	\end{tabular}\right)$ is a linear map$$\mathcal
	{Y}(\cdot, z): M^1\rightarrow Hom(M^2, M^3)\{z\}$$ $$v^1\mapsto
	\mathcal {Y}(v^1, z) = \sum_{n\in \mathbb{C}}{v_n^1z^{-n-1}}$$
	satisfying the following conditions:
	
	(1) For any $v^1\in M^1, v^2\in M^2$, and $\lambda \in \mathbb{C},
	v_{n+\lambda}^1v^2 = 0$ for $n\in \mathbb{Z}$ sufficiently large.
	
	(2) For any $a \in V, v^1\in M^1$,
	$$z_0^{-1}\delta(\frac{z_1-z_2}{z_0})Y_{M^3}(a, z_1)\mathcal
	{Y}(v^1, z_2)-z_0^{-1}\delta(\frac{z_1-z_2}{-z_0})\mathcal{Y}(v^1,
	z_2)Y_{M^2}(a, z_1)$$
	$$=z_2^{-1}\delta(\frac{z_1-z_0}{z_2})\mathcal{Y}(Y_{M^1}(a, z_0)v^1, z_2).$$
	
	(3) For $v^1\in M^1$, $\dfrac{d}{dz}\mathcal{Y}(v^1,
	z)=\mathcal{Y}(L(-1)v^1, z)$.
\end{defn}
All of the intertwining operators of type $\left(\begin{tabular}{c}
$M^3$\\
$M^1$ $M^2$\\
\end{tabular}\right)$ form a vector space denoted by $I_V\left(\begin{tabular}{c}
$M^3$\\
$M^1$ $M^2$\\
\end{tabular}\right)$. The dimension of $I_V\left(\begin{tabular}{c}
$M^3$\\
$M^1$ $M^2$\\
\end{tabular}\right)$ is called the
fusion rule of type $\left(\begin{tabular}{c}
$M^3$\\
$M^1$ $M^2$\\
\end{tabular}\right)$ for $V$.

We now have the following result which was  proved in
\cite{ADL05}.

\begin{thm}\label{n2.2c}
	Let $V^1, V^2$ be rational vertex operator algebras. Let $M^1 , M^2,
	M^3$ be $V^1$-modules and  $N^1, N^2, N^3$ be $V^2$-modules such
	that
	$$\dim I_{V^1}\left(\begin{tabular}{c}
	$M^3$\\
	$M^1$ $M^2$\\
	\end{tabular}\right)< \infty , \dim I_{V^2}\left(\begin{tabular}{c}
	$N^3$\\
	$N^1$ $N^2$\\
	\end{tabular}\right)< \infty.$$
	Then the linear map
	$$\sigma: I_{V^1}\left(\begin{tabular}{c}
	$M^3$\\
	$M^1$ $M^2$\\
	\end{tabular}\right)\otimes I_{V^2}\left(\begin{tabular}{c}
	$N^3$\\
	$N^1$ $N^2$\\
	\end{tabular}\right)\rightarrow I_{V^1\otimes V^2}\left( \begin{tabular}{c}
	$M^3\otimes N^3$\\
	$M^1\otimes N^1$ $M^2\otimes N^2$\\
	\end{tabular}\right)$$
	$$\mathcal{Y}_1( \cdot, z)\otimes \mathcal{Y}_2(\cdot, z)\mapsto (\mathcal{Y}_1\otimes
	\mathcal{Y}_2)(\cdot, z)$$\\is an isomorphism, where
	$(\mathcal{Y}_1\otimes \mathcal{Y}_2)(\cdot, z)$ is defined by
	$$(\mathcal{Y}_1\otimes \mathcal{Y}_2)(\cdot, z)(u^1\otimes v^1,z)(u^2\otimes v^2)= \mathcal{Y}_1(u^1,z)u^2\otimes \mathcal{Y}_2(v^1,z)v^2.$$
\end{thm}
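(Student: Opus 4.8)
The map $\sigma$ is nothing but the formation of a tensor product of two operators, so the plan is to verify in turn that it is well defined, injective, and surjective. First I would check that $\sigma$ lands in the right space, i.e. that $\mathcal{Y}_1\otimes\mathcal{Y}_2$ is genuinely an intertwining operator of type $\binom{M^3\otimes N^3}{M^1\otimes N^1,\,M^2\otimes N^2}$ for $V^1\otimes V^2$. The lower-truncation condition is immediate; the $L(-1)$-derivative property follows from the product rule together with $L(-1)=L^1(-1)\otimes 1+1\otimes L^2(-1)$ on $V^1\otimes V^2$; and the Jacobi identity for a general $a\otimes b\in V^1\otimes V^2$ factors into the Jacobi identities for $\mathcal{Y}_1$ and $\mathcal{Y}_2$ because the $V^1\otimes 1$ and $1\otimes V^2$ actions live on independent tensor factors and commute. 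This is the same computation used in \cite{FHL93} to describe modules over a tensor product VOA. Bilinearity then makes $\sigma$ a well-defined linear map on the tensor product of the two (finite-dimensional, by hypothesis) spaces of intertwining operators.

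Next I would prove injectivity by separation of variables. Fix bases $\{\mathcal{Y}_1^{(i)}\}$ and $\{\mathcal{Y}_2^{(j)}\}$ of $I_{V^1}\binom{M^3}{M^1\,M^2}$ and $I_{V^2}\binom{N^3}{N^1\,N^2}$, and suppose $\sigma(\sum_{i,j}c_{ij}\mathcal{Y}_1^{(i)}\otimes\mathcal{Y}_2^{(j)})=0$. Reading off coefficients gives $\sum_{i,j}c_{ij}\,(\mathcal{Y}_1^{(i)})_m(u)u'\otimes(\mathcal{Y}_2^{(j)})_n(v)v'=0$ in $M^3\otimes N^3$ for all $u,u',v,v',m,n$. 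Fixing $v,v',n$ and setting $w_i=\sum_j c_{ij}(\mathcal{Y}_2^{(j)})_n(v)v'\in N^3$, this reads $\sum_i (\mathcal{Y}_1^{(i)})_m(u)u'\otimes w_i=0$. Expressing the $w_i$ through a linearly independent subfamily and using linear independence in the second tensor factor forces every operator combination $\sum_i\lambda_i\mathcal{Y}_1^{(i)}$ occurring as a coefficient to vanish; since the $\mathcal{Y}_1^{(i)}$ are independent, all $w_i$ vanish, whence $\sum_j c_{ij}\mathcal{Y}_2^{(j)}=0$ for each $i$ and finally $c_{ij}=0$. Thus $\sigma$ is injective.

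It then remains to match dimensions, which is where the real work lies. I would first reduce to irreducible modules: fusion rules are additive in each of the six arguments, $\sigma$ is compatible with direct-sum decompositions, every module over a rational VOA is a finite direct sum of irreducibles, and by \cite{FHL93} the irreducible $V^1\otimes V^2$-modules are exactly the tensor products $M\otimes N$ with $M\in\Irr(V^1)$, $N\in\Irr(V^2)$. With everything irreducible I would invoke the $A(V)$-bimodule description of fusion rules \cite{FZ92}, \cite{Li99}: the fusion rule equals $\dim\Hom_{A(V)}(A(M^1)\otimes_{A(V)}M^2(0),\,M^3(0))$, where $A(M^1)$ is the associated bimodule and $M^2(0),M^3(0)$ are lowest-weight spaces. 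The statement then reduces to the algebraic factorizations $A(V^1\otimes V^2)\cong A(V^1)\otimes A(V^2)$, $A(M^1\otimes N^1)\cong A(M^1)\otimes A(N^1)$, and $(M\otimes N)(0)=M(0)\otimes N(0)$, together with factorization of $\Hom$ over a tensor product of algebras, giving
\[
\dim I_{V^1\otimes V^2}\binom{M^3\otimes N^3}{M^1\otimes N^1,\,M^2\otimes N^2}
=\dim I_{V^1}\binom{M^3}{M^1\,M^2}\cdot\dim I_{V^2}\binom{N^3}{N^1\,N^2}.
\]
Since $\sigma$ is then an injection between finite-dimensional spaces of equal dimension, it is an isomorphism.

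The hard part will be the dimension count just above: the Frenkel--Zhu formula is only an inequality in general and must be justified as an equality in the rational case, while the factorization of the bimodule $A(M^1\otimes N^1)$ and of the relevant tensor product over $A(V^1)\otimes A(V^2)$ needs care. An alternative that sidesteps fusion-rule dimension formulas is to prove surjectivity directly: given an intertwining operator $\mathcal{Y}$ of the tensor type, use the commuting $V^1$- and $V^2$-actions together with the conformal-weight grading to separate $\mathcal{Y}(u\otimes v,z)$ into a sum $\sum_i\mathcal{Y}_1^{(i)}(u,z)\otimes T_i(v,z)$, and then check that each recovered $V^2$-piece $T_i$ satisfies the intertwining axioms. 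I expect verifying the Jacobi identity for the separated pieces to be the technical crux of that route.
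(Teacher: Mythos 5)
First, a point of comparison: the paper does not prove this statement at all --- it is quoted verbatim from \cite{ADL05} as a known result, so there is no in-paper argument to measure your proposal against. Judged on its own terms, your outline gets the easy parts right and is honest about where the difficulty sits, but the difficult part is genuinely missing. The well-definedness of $\sigma$ is indeed the routine \cite{FHL93}-style factorization of the Jacobi identity. Injectivity is also fine in substance, though the step ``using linear independence in the second tensor factor'' should be rearranged: for a fixed choice of $(u,u',m)$ the vectors $(\mathcal{Y}_1^{(i)})_m(u)u'$ need not be linearly independent even when the $\mathcal{Y}_1^{(i)}$ are independent as operators. The clean version applies a linear functional $f$ on $N^3$ to the relation, varies $(u,u',m)$ to conclude $\sum_i f(w_i)\,\mathcal{Y}_1^{(i)}=0$ as an intertwining operator, and only then invokes independence to get $f(w_i)=0$ for all $f$, hence $w_i=0$. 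Note also that neither of these two steps uses rationality, which should tell you that all the content of the theorem is concentrated in surjectivity.

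That surjectivity step is where the gap is. Your primary route reduces everything to the chain of identifications $A(V^1\otimes V^2)\cong A(V^1)\otimes A(V^2)$, $A(M^1\otimes N^1)\cong A(M^1)\otimes A(N^1)$ as bimodules, the equality form of the Frenkel--Zhu fusion-rule formula for rational VOAs, and the factorization of $\Hom$ over a tensor product of associative algebras. You correctly flag that the Frenkel--Zhu formula is only an inequality in general; the equality for rational $V$ and irreducible modules is Li's theorem \cite{Li99} and can legitimately be cited, but the bimodule factorization $A(M^1\otimes N^1)\cong A(M^1)\otimes A(N^1)$ is itself a nontrivial assertion that you neither prove nor reference, and it is not an off-the-shelf fact in the way $A(V^1\otimes V^2)\cong A(V^1)\otimes A(V^2)$ is. In other words, the proposal replaces the theorem by an equivalent unproved statement about Zhu bimodules. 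The fallback route (directly separating $\mathcal{Y}(u\otimes v,z)$ into $\sum_i \mathcal{Y}_1^{(i)}(u,z)\otimes T_i(v,z)$ and verifying the axioms for the $T_i$) is essentially the strategy of the actual proof in \cite{ADL05}, but as written it is only a declaration of intent; constructing the $T_i$ and proving they are intertwining operators is exactly the technical crux you defer. So the proposal is a correct and well-organized plan with the decisive step unexecuted, not a proof.
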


For $m,r,s\in\Z_{+}$ such that $1\leq r\leq m+1$ and $1\leq s\leq m+2$,  let
$$c_{m}=1-\frac{6}{(m+2)(m+3)}$$ and $$h^{(m)}_{r,s}=\frac{[r(m+3)-s(m+2)]^2-1}{4(m+2)(m+3)}.$$

Recall that $L(c_{m},0)$, $m=1,2,\cdots$ are rational Virasoro vertex operator algebras and $L(c_{m}, h^{(m)}_{r,s})$, $1\leq r\leq m+1, 1\leq s\leq m+2$,  are exactly all the irreducible modules of $L(c_{m},0)$  \cite{W93}, \cite{DMZ94}, \cite{FF84}. The fusion rules for these modules are given in \cite{W93} (see also \cite{DMZ94}, \cite{FF84}).
The following lemma comes from \cite{W93} and  \cite{LS08}.
\begin{lem}\label{lam-s1}
	For $m\in\Z_{+}$, we have
	
	(1) \ $h^{(m)}_{r,s}=h^{(m)}_{m+2-r,m+3-s}$;
	
	(2) \ For any $(p,q)\neq (r,s)$ and $(p,q)\neq (m+2-r,m+3-s)$, we have
	$$
	h^{(m)}_{r,s}\neq h^{(m)}_{p,q};
	$$
	
	(3) \ For all $(r,s)\neq (1,m+2)$ and $(r,s)\neq (m+1,1)$,
	$$
	h^{(m)}_{r,s}<h^{(m)}_{1,m+2}.$$
\end{lem}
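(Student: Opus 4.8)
The plan is to reduce all three parts to elementary arithmetic of the linear form occurring in the numerator. Writing $a=m+2$ and $b=m+3$, so that $\gcd(a,b)=1$, I set $f(r,s)=r(m+3)-s(m+2)=rb-sa$, which gives
\[
h^{(m)}_{r,s}=\frac{f(r,s)^2-1}{4ab}.
\]
The key observation is that $h^{(m)}_{r,s}$ depends on $(r,s)$ only through $f(r,s)^2$, and that $t\mapsto (t^2-1)/(4ab)$ is strictly increasing in $|t|$; hence every statement becomes a statement about the integers $|f(r,s)|$. Part (1) is then immediate: expanding $f(m+2-r,m+3-s)$, the terms $(m+2)(m+3)$ cancel and one is left with $-f(r,s)$, so squaring gives $h^{(m)}_{m+2-r,m+3-s}=h^{(m)}_{r,s}$.

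For part (2), the equality $h^{(m)}_{r,s}=h^{(m)}_{p,q}$ is equivalent to $f(p,q)=\pm f(r,s)$. In the $+$ case I would rewrite this as $(r-p)b=(s-q)a$; since $\gcd(a,b)=1$ this forces $a\mid(r-p)$, and the range bound $|r-p|\le m<a$ then yields $r=p$ and hence $s=q$. In the $-$ case the relation becomes $(r+p)b=(s+q)a$, so $a\mid(r+p)$ with $2\le r+p\le 2m+2$; the only multiple of $a=m+2$ in that range is $m+2$ itself, giving $p=m+2-r$ and then $q=m+3-s$. This produces exactly the two admissible solutions, proving the claim.

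For part (3), I would extremize the linear form $f$ over the rectangle $1\le r\le m+1$, $1\le s\le m+2$. Its maximum is attained uniquely at the corner $(m+1,1)$, where $f=(m+1)(m+3)-(m+2)=m^2+3m+1$, and its minimum uniquely at $(1,m+2)$, where $f=(m+3)-(m+2)^2=-(m^2+3m+1)$. Consequently $|f(r,s)|\le m^2+3m+1$, with equality only at these two corners, which are interchanged by the symmetry of part (1). By monotonicity, $h^{(m)}_{r,s}<h^{(m)}_{1,m+2}$ for every remaining pair, as desired.

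I expect no serious obstacle: the whole lemma rests on the coprimality of $m+2$ and $m+3$ together with the prescribed index ranges. The only point demanding care is the bookkeeping in part (2), where both of these ingredients must be combined precisely to rule out every coincidence of weights except the one dictated by the symmetry in part (1).
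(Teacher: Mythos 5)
Your proof is correct and complete. Note that the paper itself offers no argument for this lemma at all --- it simply imports the statement from \cite{W93} and \cite{LS08} --- so your elementary derivation is a genuine addition rather than a rival to an existing proof. All three reductions check out: $f(m+2-r,m+3-s)=-f(r,s)$ gives (1); in (2) the case $f(p,q)=f(r,s)$ forces $(m+2)\mid(r-p)$ with $|r-p|\le m$, hence $(p,q)=(r,s)$, while the case $f(p,q)=-f(r,s)$ forces $(m+2)\mid(r+p)$ with $2\le r+p\le 2m+2$, whose only solution is $r+p=m+2$, hence $(p,q)=(m+2-r,m+3-s)$; and in (3) the strict monotonicity of $f$ in each variable pins the extrema of $|f|$ uniquely at the two corners $(m+1,1)$ and $(1,m+2)$, which are exchanged by the symmetry of (1). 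One small point worth making explicit, since your two cases in (2) overlap when $f(r,s)=0$: this cannot happen, because $rb=sa$ with $\gcd(a,b)=1$ would force $(m+2)\mid r$, impossible for $1\le r\le m+1$; adding that one sentence makes the case split airtight. The payoff of your route is that the lemma becomes self-contained, resting only on the coprimality of $m+2$ and $m+3$ and the index ranges, with no appeal to the representation-theoretic machinery of the cited references.
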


Let $m\in\Z_{+}$. An ordered triple of pairs of integers $((r,s), (r',s'), (r'',s''))$ is called {\em admissible}, if $1\leq r,r',r''\leq m+1$, $1\leq s,s',s''\leq m+2$, $ r+r'+r''\leq 2m+3$, $s+s'+s''\leq 2m+5$, $r<r'+r'', r'<r+r'', r''<r+r', s<s'+s'', s'<s+s'', s''<s+s'$, and both $r+r'+r''$ and $s+s'+s''$ are odd. The following result comes from \cite{W93} ( for $c_1=\frac{1}{2}$, also see \cite{DMZ94}).
\begin{thm}\label{wang}
	Let $m\in\Z_{+}$. Denote
	$$N_{(r,s),(r',s')}^{(r'',s'')}=\dim I_{L(c_{m},0)}\left(\begin{tabular}{c}
	$L(c_{m},h^{(m)}_{r'',s''})$\\
	$L(c_m, h^{(m)}_{r,s})$ $L(c_m, h^{(m)}_{r',s'})$\\
	\end{tabular}\right).$$
	Then $N_{(r,s),(r',s')}^{(r'',s'')}=1$  if and only if $((r,s), (r',s'), (r'',s''))$ is admissible; otherwise, $N_{(r,s),(r',s')}^{(r'',s'')}$ $=0$. In particular,
	$$
	\dim I_{L(c_m,0)}\left(\begin{tabular}{c}
	$L(c_m,0)$\\
	$L(c_m,h^{(m)}_{m+1,1})$ $L(c_m,h^{(m)}_{m+1,1})$\\
	\end{tabular}\right)=1, 
	$$
	$$
	\dim I_{L(c_m,0)}\left(\begin{tabular}{c}
	$L(c_m,h)$\\
	$L(c_m,h^{(m)}_{m+1,1})$ $L(c_m,h^{(m)}_{m+1,1})$\\
	\end{tabular}\right)=0, \ h\neq 0.$$
\end{thm}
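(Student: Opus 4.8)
The plan is to compute these fusion rules within the Frenkel--Zhu theory of $A(V)$-bimodules (\cite{FZ92}, \cite{Li99}, \cite{W93}), using the structure of the Virasoro Verma modules and their singular vectors to read off the selection rules. Write $V=L(c_m,0)$, and for an irreducible module $M=L(c_m,h^{(m)}_{r,s})$ let $A(M)=M/O(M)$ be its associated $A(V)$-bimodule. By the Frenkel--Zhu formula,
$$
N_{(r,s),(r',s')}^{(r'',s'')}=\dim\Hom_{A(V)}\!\big(A(L(c_m,h^{(m)}_{r,s}))\otimes_{A(V)}\C_{r',s'},\ \C_{r'',s''}\big),
$$
where $\C_{r',s'}$ and $\C_{r'',s''}$ denote the one-dimensional top levels of $L(c_m,h^{(m)}_{r',s'})$ and $L(c_m,h^{(m)}_{r'',s''})$, on which $\omega+O(V)$ acts by the scalars $h^{(m)}_{r',s'}$ and $h^{(m)}_{r'',s''}$. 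The leading coefficient of an intertwining operator of the given type maps into the one-dimensional space $\C_{r'',s''}$, so the content of the theorem is (i) to decide for which triples a genuine intertwining operator realizing a nonzero leading term exists, and (ii) that when it exists it is unique, so that $N\in\{0,1\}$.

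First I would record the Zhu algebra. Since $V$ is the simple quotient of the universal Virasoro vertex operator algebra $M(c_m,0)$ by the ideal generated by its singular vector, and $A(M(c_m,0))\cong\C[x]$ with $x=\omega+O(M(c_m,0))$ acting as $L(0)$ on lowest-weight vectors, the image of that singular vector yields a relation $g_m(x)=0$, giving $A(V)\cong\C[x]/(g_m(x))$ with $g_m(x)=\prod(x-h^{(m)}_{r,s})$ over the finitely many \emph{distinct} admissible weights (the distinctness being exactly Lemma \ref{lam-s1}(2)). Thus $A(V)$ is a product of copies of $\C$, one for each irreducible module, which reproves rationality and confirms the parametrization of irreducibles by the $h^{(m)}_{r,s}$. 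The bimodules $A(L(c_m,h^{(m)}_{r,s}))$ are then obtained by the same device: one begins with the bimodule attached to the corresponding Verma module, where no singular-vector relations have been imposed, and cuts it down by the relations coming from the singular vectors of $M(c_m,0)$ and of the degenerate Verma module $M(c_m,h^{(m)}_{r,s})$.

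The technical heart, and the step I expect to be the main obstacle, is to convert these singular-vector relations into the explicit combinatorial conditions. An intertwining operator of type $\binom{L(c_m,h^{(m)}_{r'',s''})}{L(c_m,h^{(m)}_{r,s})\ L(c_m,h^{(m)}_{r',s'})}$ is governed by a three-point function with leading exponent $h^{(m)}_{r'',s''}-h^{(m)}_{r,s}-h^{(m)}_{r',s'}$, and its existence on the irreducible quotients is obstructed precisely by the primitive singular vectors of the three degenerate Verma modules involved, occurring (in the standard normalization) at relative levels $rs$ and $(m+2-r)(m+3-s)$ for a label $(r,s)$. Applying $\mathcal{Y}$ to such a singular vector must give zero, which becomes a null-vector decoupling equation for the three-point function; its solvability, together with one-dimensionality of its solution space, simultaneously yields the uniqueness in (ii) and the selection rules in (i). Carrying out the Feigin--Fuchs/Kac bookkeeping \cite{FF84}, the conditions separate into two independent $\hsl$-type constraints: one on the row triple $(r,r',r'')$ with truncation level $m+1$, the other on the column triple $(s,s',s'')$ with truncation level $m+2$. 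Each forces its triple to obey the triangle inequalities, the corresponding upper bound ($r+r'+r''\le 2m+3$, resp. $s+s'+s''\le 2m+5$), and the odd-sum parity condition, and conversely a nonzero operator survives the quotient under all of these. This is exactly the definition of admissibility, so $N_{(r,s),(r',s')}^{(r'',s'')}=1$ precisely in the admissible case and $0$ otherwise. The factorization into two independent $\hsl$ constraints is most transparent in the GKO coset realization of $L(c_m,0)$ inside $L_{\hsl}(m,0)\otimes L_{\hsl}(1,0)$ (affine $\hsl$ at levels $m$ and $1$), where the fusion of the two tensor factors is controlled by Theorem \ref{n2.2c}; this is a natural way to organize and double-check the computation, with the parity conditions arising from the level-one factor.

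Finally, the two displayed special cases follow by specializing $(r,s)=(r',s')=(m+1,1)$ and testing admissibility directly. For the row indices $r=r'=m+1$, the upper bound forces $r''\le 2m+3-2(m+1)=1$, hence $r''=1$; for the column indices $s=s'=1$, the triangle inequality forces $s''<s+s'=2$, hence $s''=1$; both choices satisfy the parity conditions. Thus the only admissible target is $(r'',s'')=(1,1)$, and since $h^{(m)}_{1,1}=0$ this gives fusion rule $1$ into $L(c_m,0)$ and fusion rule $0$ into $L(c_m,h)$ for every $h\ne 0$.
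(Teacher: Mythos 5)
This theorem is not proved in the paper at all: it is quoted verbatim from Wang \cite{W93} (with \cite{DMZ94} for the $c_1=\tfrac12$ case), so there is no internal argument to compare yours against. What you have written is, in outline, a reconstruction of Wang's own proof: compute $A(L(c_m,0))\cong\C[x]/(g_m(x))$ from the image of the level-$(m+1)(m+2)$ singular vector of $M(c_m,0)$, identify the $A(V)$-bimodules of the irreducible modules by imposing the singular-vector relations on the Verma bimodules, and apply the Frenkel--Zhu formula \cite{FZ92}, \cite{Li99} to reduce the fusion rule to a Hom-space computation; the GKO coset remark is a legitimate cross-check but not needed. Your verification of the two displayed special cases from the admissibility conditions is correct: $r=r'=m+1$ forces $r''=1$ via $r+r'+r''\le 2m+3$, $s=s'=1$ forces $s''=1$ via $s''<s+s'$, both parities hold, and $h^{(m)}_{1,1}=0$. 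The one place where your write-up is a plan rather than a proof is the "technical heart" you yourself flag: the passage from the null-vector decoupling equations at relative levels $rs$ and $(m+2-r)(m+3-s)$ to the precise triangle, truncation, and parity conditions is asserted ("the conditions separate into two independent $\hsl$-type constraints\ldots") rather than derived, and this Feigin--Fuchs bookkeeping is exactly where all the work in \cite{W93} lies. Since the paper itself treats the statement as a citation, this level of detail is acceptable for the present purpose, but you should be aware that you have outlined the proof rather than given one.
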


Let $V$ be a vertex operator algebra and $\phi$ a conjugate involution of $V$. Recall from \cite{KPP08} and \cite{DL14} that a Hermitian form $(\cdot|\cdot)_H$ on $V$ is called $\phi$-invariant if for any $u,v,w\in V$,
$$
(Y(e^{zL(1)}(-z^{-2})^{L(0)}u,z^{-1})v|w)=(v|Y(\phi(u),z)w).
$$
If further $(\cdot|\cdot)_H$ is positive-definite, $V$ is called unitary.
\section{A class of OZ-type vertex operator algebras}
\def\theequation{3.\arabic{equation}}
\setcounter{equation}{0}
In this section, we will study  vertex operator algebras $V$ satisfying the following conditions:

(I) $V$ is simple, OZ type  and  generated by its Griess algebra $V_2$.

(II) $V_2$ is linearly spanned by Virasoro elements $\omega^{ij}=\omega^{ji}$, $1\leq i<j\leq n$ such that each vertex subalgebra generated by the Virasoro vector $\omega^{ij}$ is isomorphic to the unitary simple vertex operator algebra $L(c_m,0)$ and for distinct $1\leq i,j,k,l\leq n$, 
\begin{equation}\label{e3.1}
 \omega^{ij}_1\omega^{jl}=\frac{1}{2}h^{(m)}_{m+1,1}(\omega^{ij}+\omega^{jl}-\omega^{il}),  
\end{equation}
and
 \begin{equation}\label{e3.2}
\omega^{ij}_p\omega^{kl}=0, \ p\geq 0, \omega^{ij}_3\omega^{ij}=\frac{c_{m}}{2}{\bf 1}, \ \   
 \omega^{ij}_3\omega^{jl}=\frac{c_mh_{m+1,1}^{(m)}}{8}{\bf 1}, 
\end{equation}
where $n\geq 3$,  $m\geq 1$ and $c_m=c_{m+2,m+3}=1-\frac{6}{(m+2)(m+3)}$, $h^{(m)}_{r,s}=\frac{[r(m+3)-s(m+2)]^2-1}{4(m+2)(m+3)}$, $1\leq r\leq m+1, 1\leq s\leq m+1$ are the same as in Section 2. It is easy to see that
 $$h^{(m)}_{m+1,1}=\frac{m(m+1)}{4}.$$
\begin{rmk} If $n=2$, then $V=L(c_m,0)$. 
	If  $m=1$,  $V$ is the  class of vertex operator algebras   generated by  Ising vectors of $\sigma$-type  studied in \cite{JLY19},  \cite{JL16}, \cite{LS08}, \cite{LSY07},  \cite{LY2}, \cite{Ma}, etc. Complete classification of OZ-type  vertex operator algebras generated by Ising vectors of $\sigma$-type was established  in \cite{JLY23}.  
\end{rmk}
\begin{rmk} Let $V$ be an OZ-type vertex operator algebra satisfying (I)-(II), then the Griess algebra $V_2$ is  the non-degenerate  Matsuo algebra $B_{\al,\beta}(G)$
	introduced in \cite{Ma} ( see also \cite{JLY19}) with $\alpha=h^{(m)}_{m+1,1}$, $\beta=c_m$, and $G={\mathfrak S}_n$, where ${\mathfrak S}_n$ is the $n$-symmetric group.
	
\end{rmk}

	Let $V$ be a vertex operator algebra satisfying (I) and (II). Then 
	\begin{equation}\label{e3.5-1}
	\omega^{ij}_1(\omega^{jl}-\omega^{il})={h^{(m)}_{m+1,1}}(\omega^{jl}-\omega^{il}), \  	\omega^{ij}_p(\omega^{jl}-\omega^{il})=0, \ p\in\Z_{\geq 2},
	\end{equation}
	and for distinct $i,j,k,l$, 
	\begin{equation}\label{e3.5-1-2}
\omega^{ij}_p\omega^{kl}=0, \  p\geq 0.
	\end{equation}
Denote by $L^{(ij)}(c_m,0)$ the simple Virasoro algebra generated by $\omega^{ij}$. By (\ref{e3.5-1}), for distinct $1\leq i,j,l\leq n$, 
$\omega^{il}-\omega^{jl}$ generates an $L^{(ij)}(c_m,0)$-module with conformal weight $h_{m+1,1}^{(m)}=\frac{m(m+1)}{4}$. 
Since $V$ is simple and OZ type, it follows from 
\cite{Li94} that  there is a unique non-degenerate bilinear form on $V$ such that 
$$({\bf 1}|{\bf 1})=1$$
and 
$$
(v|Y(u,z)w)=(Y(e^{zL(1)}(-z^{-2})^{L(0)}u,z^{-1})v|w),
$$
for $u,v,w\in V$, where 
$$
Y(w, z)=\sum\limits_{n\in\Z}L(n)z^{-n-2},
$$
and
\begin{equation}\label{e3.3-3}
\omega=\frac{2}{(n-2)h_{m+1,1}^{(m)}+2}\sum\limits_{1\leq i<j\leq n}\omega^{ij}
=\frac{8}{(n-2)m(m+1)+8}\sum\limits_{1\leq i<j\leq n}\omega^{ij}
\end{equation}
is the conformal vector of $V$. 
Then $V$  as a module of itself is isomorphic to its contragredient module $V'=\oplus_{n=0}^{\infty}V_n^*$ \cite{Li94}, where $V_n^*$ is the dual space of $V_n$ \cite{FHL93}.  
By (\ref{e3.2}), 
\begin{equation}\label{e3.3-1}
(\omega^{ij}|\omega^{ij})=\frac{1}{2}c_m=\frac{m(m+5)}{2(m+2)(m+3)}, 
\end{equation}
\begin{equation}\label{eq3.3-2}
(\omega^{ij}|\omega^{jl})=\frac{1}{8}c_mh_{m+1,1}^{(m)}=\frac{m^2(m+1)(m+5)}{32(m+2)(m+3)}.
\end{equation}
For $\omega^{ij}$, denote
$$
Y(\omega^{ij}, z)=\sum\limits_{n\in\Z}L^{(ij)}(n)z^{-n-2}.
$$
Since  $L^{(ij)}(1)\omega^{ij}=0$, it follows that for $u,v\in V$ and $n\in\Z$, 
\begin{equation}\label{e3.3-8}
(L^{(ij)}(n)u|v)=(u|L^{(ij)}(-n)v).
\end{equation}
We  are now in a position to state the main results of this section.
\begin{thm}\label{thm1}
	Let $V$ be  a  vertex operator algebra satisfying (I)-(II). Then $V$ is linearly spanned by elements of the form:
\begin{equation}\label{ethm1-1}
\omega^{i_{1}j_{1}}_{n_{1}}\omega^{i_{2}j_{2}}_{n_{2}}\cdots \omega^{i_{s}j_{s}}_{n_{s}}{\bf 1},
\end{equation}
where $s\geq 0$, $n_{k}\leq 0$ and $0\leq i_{k}<j_{k}\leq n$, $k=1,2,\cdots,s$.
\end{thm}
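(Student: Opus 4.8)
The plan is to show that the span $U$ of the monomials in \eqref{ethm1-1} is a vertex operator subalgebra of $V$ containing every generator $\omega^{ij}$; since $V$ is generated by the $\omega^{ij}$ by hypothesis (I), this forces $U = V$. First I would observe that $U$ manifestly contains $\mathbf{1}$ (the $s=0$ term) and each $\omega^{ij}$ (take $s=1$, $n_1 = -2$, so that $\omega^{ij}_{-2}\mathbf{1} = \omega^{ij}$). Thus the whole argument reduces to the closure statement: $U$ is stable under all the modes $a_n$ for $a \in U$, $n \in \Z$. Because the modes $\omega^{ij}_m$ generate (as an associative algebra acting on $V$) the action of every vertex operator $Y(a,z)$ for $a$ in the subalgebra generated by the $\omega^{ij}$, it suffices to prove the weaker statement that $U$ is closed under the action of each single mode $\omega^{ij}_m$, $m \in \Z$.

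The key step is therefore to push an arbitrary mode $\omega^{ij}_m$ past a product of lowering modes applied to $\mathbf{1}$ and rewrite the result as a sum of monomials of the same shape. I would proceed by induction on the length $s$ of the monomial and, within fixed length, by downward induction controlling the indices $n_k$. The engine is the standard commutator (Borcherds) formula
\begin{equation}\label{eprop-comm}
[\omega^{ij}_m, \omega^{kl}_{n}] = \sum_{t \geq 0} \binom{m}{t} (\omega^{ij}_t \omega^{kl})_{m+n-t}.
\end{equation}
When $m > 0$ the relevant thing to control is the terminal step, where the mode reaches $\mathbf{1}$: one uses $\omega^{ij}_m \mathbf{1} = 0$ for $m \geq -1$ together with $\omega^{ij}_{-m-2}\mathbf{1}$ being (a derivative of) a vector already in $U$. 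The commutator \eqref{eprop-comm} reduces the bad product $\omega^{ij}_m \omega^{kl}_n$ to the good product $\omega^{kl}_n \omega^{ij}_m$ plus correction terms whose vertex-operator inputs are the vectors $\omega^{ij}_t \omega^{kl}$ for $t \geq 0$. The whole point of the defining relations \eqref{e3.1}--\eqref{e3.2} (equivalently \eqref{e3.5-1}--\eqref{e3.5-1-2}) is that these inputs are \emph{again} linear combinations of the generators $\omega^{ab}$ and of $\mathbf{1}$: indeed $\omega^{ij}_1 \omega^{kl}$, $\omega^{ij}_3 \omega^{kl}$, etc. all lie in $\spn\{\omega^{ab}\} \oplus \C\mathbf{1} \subseteq U_2 \oplus U_0$, and $\omega^{ij}_t\omega^{kl} = 0$ for the remaining $t \geq 0$. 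Hence the correction modes in \eqref{eprop-comm} are themselves modes of elements of $U$, so the induction closes: every correction term is a monomial in $U$ with strictly smaller length (since $\omega^{ij}_t\omega^{kl}$ with $t \geq 0$ has weight strictly less than $\wt \omega^{ij} + \wt \omega^{kl}$), allowing the inductive hypothesis to apply.

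I expect the main obstacle to be the bookkeeping needed to guarantee that the induction genuinely terminates, i.e.\ that every time a positive mode $\omega^{ij}_m$ is commuted inward the resulting terms are strictly simpler in a well-founded ordering. Concretely, one must check that the corrections coming from \eqref{eprop-comm}, after being rewritten using \eqref{e3.1}--\eqref{e3.2}, never reintroduce a monomial of equal or greater complexity; here the crucial inputs are that $\omega^{ij}_t\omega^{kl} = 0$ for all but finitely many $t \geq 0$ and, for those finitely many, that the output is a combination of $\mathbf{1}$ and the degree-two generators, so the length of the monomial drops. Setting up the right lexicographic ordering on (length, total weight, number of positive modes) and verifying that each application of the commutator formula strictly decreases it in this order is the heart of the argument; once that ordering is in place, the spanning claim follows by a routine induction.
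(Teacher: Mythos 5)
Your overall frame (show the span $U$ of the monomials (\ref{ethm1-1}) is closed under every mode $\omega^{ij}_p$, reduce to the generators, and straighten positive modes inward with the commutator formula) is the right starting point, and the reduction to single modes of the $\omega^{ij}$ is standard. But there is a genuine gap at the step you yourself flag as the heart of the argument. You assert that the correction inputs $\omega^{ij}_t\omega^{kl}$, $t\geq 0$, all lie in $\spn\{\omega^{ab}\}\oplus\C{\bf 1}$, with the remaining products vanishing, so that each commutation strictly drops the length. This is false for $t=0$ when the index pairs overlap: the relations (\ref{e3.1})--(\ref{e3.2}) pin down only the first and third products (and all products of disjoint pairs), and by (\ref{e3.5-1}) the vector $\omega^{jl}-\omega^{il}$ is a highest weight vector for $L^{(ij)}(c_m,0)$ of weight $h^{(m)}_{m+1,1}=\tfrac{m(m+1)}{4}>0$, so $\omega^{ij}_0(\omega^{jl}-\omega^{il})=L^{(ij)}(-1)(\omega^{jl}-\omega^{il})$ is a \emph{nonzero} weight-three vector, not a combination of ${\bf 1}$ and the $\omega^{ab}$. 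Consequently the correction $(\omega^{ij}_0\omega^{kl})_{p}=[\omega^{ij}_0,\omega^{kl}_p]$ reintroduces a product of two generator modes, and the problematic instance --- $\omega^{ij}_1$ applied to $(\omega^{jl}-\omega^{il})_0u^1$ --- produces terms of the \emph{same} length and the \emph{same} weight as the input. No ordering on (length, weight, number of positive modes) decreases here, so the induction as you describe it does not terminate; this is precisely why a purely combinatorial straightening argument is not known to work in this setting.

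The paper closes exactly this case by representation theory rather than bookkeeping. It first decomposes $V=V_{[0]}\oplus V_{[h^{(m)}_{m+1,1}]}$ as an $L^{(ij)}(c_m,0)$-module, using that for fixed $i<j$ the space $V_2$ is spanned by $\omega^{ij}$, $\omega^{il}-\omega^{jl}$, $\omega^{ijl}-\omega^{ij}$ and the disjoint $\omega^{kl}$ (highest weight vectors of weights $0$ or $h^{(m)}_{m+1,1}$), together with the fusion rule (\ref{ethm1-3}) $L(c_m,h^{(m)}_{m+1,1})\boxtimes L(c_m,h^{(m)}_{m+1,1})=L(c_m,0)$. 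Then, in the hard case $u=(\omega^{jl}-\omega^{il})_0u^1$, it splits according to whether $u^1$ lies in $V_{[0]}$ or $V_{[h^{(m)}_{m+1,1}]}$: in the first case it writes $u^1$ as a genuine highest weight vector plus $\omega^{ij}_{-m_k}$-descendants and uses that $\omega^{ij}_pu^2=0$ for $p\geq 0$; in the second case it uses the fusion rule to conclude $(\omega^{jl}-\omega^{il})_0u^1\in V_{[0]}$ and then expresses it through descendants of weight-zero highest weight vectors already known (by the weight induction) to lie in $U$. If you want to repair your proposal, you would need to import this module decomposition and the fusion-rule input; the defining relations alone do not control $\omega^{ij}_0\omega^{jl}$, and without that control the straightening loop does not close.
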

\begin{proof} By assumption, $V$ is generated by $\omega^{ij}, 1\leq i<j\leq n$ and $V_2$ is linearly spanned by $\omega^{ij}$, $1\leq i<j\leq n$. It follows that $V$ is linearly spanned by elements of the form:
$$
\omega^{i_{1}j_{1}}_{n_{1}}\omega^{i_{2}j_{2}}_{n_{2}}\cdots \omega^{i_{s}j_{s}}_{n_{s}}{\bf 1},	
$$
where $s\geq 0$, $n_{k}\in\Z$ and $0\leq i_{k}<j_{k}\leq n$, $k=1,2,\cdots,s$. For  distinct $i, j, l$ in $\{1,2,\cdots, n\}$, let
\begin{equation}\label{ethm2-6}
\omega^{ijl}=\frac{8}{m(m+1)+8}(\omega^{ij}+\omega^{jl}+\omega^{il}).
\end{equation}
Then
$$
\omega^{ij}_p(\omega^{ijl}-\omega^{ij})=0, \ p\geq 1.
$$
So $\omega^{ijl}-\omega^{ij}$ generates an irreducible $L^{(ij)}(c_m,0)$-module of conformal weight $0$, which is isomorphic to $L^{(ij)}(c_m,0)$ as an $L^{(ij)}(c_m,0)$-module. Thus
$$
\omega^{ij}_0(\omega^{ijl}-\omega^{ij})=0.
$$
By (\ref{e3.5-1}) and (\ref{e3.5-1-2}), $\omega^{jl}-\omega^{il}$ generates an irreducible  $L^{(ij)}(c_m,0)$-module isomorphic to $L^{(ij)}(c_m, h_{m+1,1}^{(m)})$, and $\omega^{kl}$, for $k,l\neq i,j$, generates an irreducible $L(c_m,0)$-module isomorphic to $L(c_m,0)$.  Notice that for fixed $1\leq i<j\leq n$, 
$V_2$ is linearly spanned by $$\{\omega^{ij}, \omega^{il}-\omega^{jl}, \omega^{ijl}-\omega^{ij}, \omega^{kl}|1\leq k<l \leq n, k,l\neq i,j\},$$
and $V$ is generated by $V_2$, and by \cite{W93}
\begin{equation}\label{ethm1-3}
L(c_m, h^{(m)}_{m+1,1})\boxtimes L(c_m, h^{(m)}_{m+1,1})=L(c_m, 0).
\end{equation}
Then we deduce that 
\begin{equation}\label{ethm1-2}
V=V_{[0]}\oplus V_{[h^{(m)}_{m+1,1}]},
\end{equation}
where $V_{[0]}$ is the direct sum of irreducible $L^{(ij)}(c_m,0)$-modules isomorphic to $L^{(ij)}(c_m,0)$, and $V_{[h^{(m)}_{m+1,1}]}$ is the direct sum of irreducible $L^{(ij)}(c_m,0)$-modules isomorphic to $L^{(ij)}(c_m, h^{(m)}_{m+1,1})$.  

\vskip 0.2cm
We denote by $U$ the subspace of $V$ linearly spanned by elements of the form (\ref{ethm1-1}).  As shown in \cite{JL16}, \cite{JLY19}, it is enough to prove that for  fixed $1\leq i<j\leq n$, 
and any homogeneous $u\in U$,  $\omega^{ij}_1u\in U$. We will prove this by induction on the weight of $u$. If $wt(u)=2$, it follows from the assumption that $\omega^{ij}_1u\in U$. Suppose that for $u\in U$ such that $wt(u)\leq n-1$, $\omega^{ij}_1u\in U$. We now assume that $wt(u)=n$, $u\in U$.
  If $u=x_0u^1$, where $u^1\in U$, $x=\omega^{ij}$ or $x=\omega^{ijl}-\omega^{ij}$, for some $l\neq i, j$, then by inductive assumption, 
$$
\begin{array}{ll}
& \omega^{ij}_1u=\omega^{ij}_1\omega^{ij}_0u^1\\
&=\omega^{ij}_0\omega^{ij}_1u^1+\sum\limits_{j=0}^1\left(\begin{array}{l}1 \\ j\end{array}\right)(\omega^{ij}_j\omega^{ij})_{1-j}u^1\\
&=\omega^{ij}_0\omega^{ij}_1u^1+\omega^{ij}_0u^1\in U
\end{array}
$$
if $x=\omega^{ij}$, and 
$$
\begin{array}{ll}
& \omega^{ij}_1u=\omega^{ij}_1(\omega^{ijl}-\omega^{ij})_0u^1\\
&=(\omega^{ijl}-\omega^{ij})_0\omega^{ij}_1u^1+\sum\limits_{j=0}^1\left(\begin{array}{l}1 \\ j\end{array}\right)(\omega^{ij}_j(\omega^{ijl}-\omega^{ij}))_{1-j}u^1\\
&=(\omega^{ijl}-\omega^{ij})_0\omega^{ij}_1u^1\in U
\end{array}
$$
if $x=\omega^{ijl}-\omega^{ij}$. So we may assume that $u=(\omega^{jl}-\omega^{il})_0u^1$, for some $l\neq i,j$.  By (\ref{ethm1-2}), we may assume that $u^1\in V_{[0]}$ or $u^1\in V_{[h^{(m)}_{m+1,1}]}$. 

\vskip 0.2cm
{\bf Case 1} $u^1\in V_{[0]}$.  By inductive assumption, $V_k=U_k$, for $0\leq k\leq n-1$. Then we may assume that 
$$
u^1=u^2+ u^3,
$$
where $u^2$ is a highest weight vector of an irreducible $L(c_m,0)$-module isomorphic to $L(c_m,0)$, and $u^3$ is a sum of elements of the form
$$
u^3=\sum\limits_{k=1}^s\omega^{ij}_{-m_k}v^k, 
$$
where $m_1, \cdots, m_s\geq 1$, $v^1, \cdots, v^s\in U$. Then 
$$
\omega^{ij}_pu^2=0, \ p\geq 0, 
$$
and 
$$
\begin{array}{ll}
&\omega^{ij}_1(\omega^{jl}-\omega^{il})_0u^2\\
&=(\omega^{ij}_0(\omega^{jl}-\omega^{il}))_1u^2+h^{(m)}_{m+1,1}(\omega^{jl}-\omega^{il})_0u^2\\
&=\omega^{ij}_0(\omega^{jl}-\omega^{il})_1u^2+h^{(m)}_{m+1,1}(\omega^{jl}-\omega^{il})_0u^2\in U,
\end{array}
$$
$$
\begin{array}{ll}
&\omega^{ij}_1(\omega^{jl}-\omega^{il})_0u^3=\omega^{ij}_1(\omega^{jl}-\omega^{il})_0(\sum\limits_{k=1}^s\omega^{ij}_{-m_k}v^k)\\
&=(\omega^{jl}-\omega^{il})_0\omega^{ij}_1u^3+h^{(m)}_{m+1,1}(\omega^{jl}-\omega^{il})_0u^3+\sum\limits_{k=1}^s(\omega^{ij}_0(\omega^{jl}-\omega^{il}))_1\omega^{ij}_{-m_k}v^k\\
& =(\omega^{jl}-\omega^{il})_0\omega^{ij}_1u^3+h^{(m)}_{m+1,1}(\omega^{jl}-\omega^{il})_0u^3+\sum\limits_{k=1}^s\omega^{ij}_{-m_k}(\omega^{ij}_0(\omega^{jl}-\omega^{il}))_1v^k\\
&+((\omega^{ij}_0(\omega^{jl}-\omega^{il}))_0\omega^{ij})_{-m_k+1}v^k+h^{(m)}_{m+1,1}(\omega^{ij}_0(\omega^{jl}-\omega^{il}))_{-m_k}v^k.
\end{array}
$$
Since $m_k\geq 1$, together with the inductive assumption, we deduce that $\omega^{ij}_1(\omega^{jl}-\omega^{il})_0u^3\in U$.

\vskip 0.2cm
{\bf Case 2} $u^1\in V_{[h^{(m)}_{m+1,1}]}$. By (\ref{ethm1-3}),  $(\omega^{jl}-\omega^{il})_0u^1\in V_{[0]}$. Since $V_{[0]}$ is a direct sum of irreducible $L(c_m,0)$-modules isomorphic to $L(c_m,0)$, it follows that for any $w\in V_n$, $w=w^1+w^2$, where $w^1$ is a highest weight vector of an irreducible $L(c_m,0)$-module isomorphic to $L(c_m,0)$, and $w^2$ is a sum of elements of the form: $\omega^{ij}_{-n_1}\cdots \omega^{ij}_{-n_r}w'$ such that  $n_k\geq 1$ and $w'$ is a highest weight vector of an irreducible $L(c_m,0)$-module isomorphic to $L(c_m,0)$. So 
$$
(\omega^{jl}-\omega^{il})_0u^1=u'+u'',
$$
where $u'$ is a highest weight vector of an irreducible $L(c_m,0)$-module isomorphic to $L(c_m,0)$, and $u''$ is a sum of elements of the form: $\omega^{ij}_{-m_1}\cdots \omega^{ij}_{-m_s}y$ such that 
$m_k\geq 1$ and $y$ is a highest weight vector of an irreducible $L(c_m,0)$-module isomorphic to $L(c_m,0)$. By inductive assumption, $\omega^{ij}_{-m_1}\cdots \omega^{ij}_{-m_s}y\in U$. So $u''\in U$. Then we have
$$
\omega^{ij}_1(\omega^{jl}-\omega^{il})_0u^1=\omega^{ij}_1(u'+u'')=\omega^{ij}_1u''.
$$
Notice that 
$$
\omega^{ij}_1\omega^{ij}_{-m_1}\cdots \omega^{ij}_{-m_s}y=\omega^{ij}_{-m_1}\omega^{ij}_1\omega^{ij}_{-m_2}\cdots \omega^{ij}_{-m_s}y+(m_1+1)\omega^{ij}_{-m_1}\cdots \omega^{ij}_{-m_s}y\in U.
$$
This proves that $\omega^{ij}_1(\omega^{jl}-\omega^{il})_0u^1\in U$. 
\end{proof}
\begin{thm}\label{thm2}
	Let $V$ be  an OZ-type  vertex operator algebra satisfying (I)-(II). Then $V$ is uniquely determined by the structure of $V_2$. 	
\end{thm}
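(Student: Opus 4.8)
The plan is to prove the following precise form of the statement: if $V$ and $\widetilde{V}$ both satisfy (I)--(II) and $\psi\colon V_2\to\widetilde{V}_2$ is an isomorphism of Griess algebras, then $\psi$ extends to an isomorphism of vertex operator algebras $V\to\widetilde{V}$. Since the Griess algebra is the Matsuo algebra $B_{\alpha,\beta}(\mathfrak{S}_n)$ with $\alpha=h^{(m)}_{m+1,1}$ and $\beta=c_m$, its distinguished Virasoro vectors $\omega^{ij}$ are intrinsically characterized among the elements of $V_2$, so after composing $\psi$ with a relabeling I may assume $\psi(\omega^{ij})=\widetilde{\omega}^{ij}$ for all $1\le i<j\le n$. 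By Theorem \ref{thm1}, $V$ is spanned by the monomials $\omega^{i_1j_1}_{n_1}\cdots\omega^{i_sj_s}_{n_s}{\bf 1}$ with $n_k\le 0$, and likewise $\widetilde{V}$ by the tilded monomials. I define $\Phi$ on the spanning set of $V$ by replacing every $\omega^{i_kj_k}$ with $\widetilde{\omega}^{i_kj_k}$; the content of the theorem is that $\Phi$ descends to a well-defined vertex operator algebra isomorphism.

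The essential tool is the unique nondegenerate invariant bilinear form on each of $V$ and $\widetilde{V}$, normalized by $({\bf 1}|{\bf 1})=1$, which exists because each algebra is simple and of OZ type \cite{Li94}. The key step, which I expect to be the main obstacle, is to show that the Gram matrix of the spanning monomials depends only on $V_2$ and is therefore the same in $V$ and $\widetilde{V}$. The mechanism is as follows. For $u\in V_2$ the products $\omega^{ij}_p u$ with $p\ge 1$ lie in $V_2\oplus V_0$ and are read off from \eqref{e3.1}, \eqref{e3.2} and the $L(c_m,0)$-relations $\omega^{ij}_1\omega^{ij}=2\omega^{ij}$, $\omega^{ij}_2\omega^{ij}=0$; the remaining action of the modes $\omega^{ij}_n$ on the monomials is governed by the Borcherds commutator formula together with the $L^{(ij)}(c_m,0)$-module structure and the decomposition \eqref{ethm1-2} $V=V_{[0]}\oplus V_{[h^{(m)}_{m+1,1}]}$ established in the proof of Theorem \ref{thm1}. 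Using the adjoint relation $(\omega^{ij}_p)^\dagger=\omega^{ij}_{2-p}$, which follows from \eqref{e3.3-8} and $L^{(ij)}(1)\omega^{ij}=0$, I compute the pairing of two monomials by repeatedly transferring a mode across the form and then commuting it to the right until it annihilates ${\bf 1}$ via $\omega^{ij}_q{\bf 1}=0$ for $q\ge 0$. Each step strictly decreases the total number of modes, so the recursion terminates in a scalar that is a universal expression in the structure constants of $V_2$ and in $({\bf 1}|{\bf 1})$. Controlling this reduction carefully --- in particular the weight-three terms $\omega^{ij}_0\omega^{kl}$ produced by the commutators --- is the technical heart of the argument; once it is carried out, the two Gram matrices coincide.

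Granting the equality of Gram matrices, well-definedness and bijectivity of $\Phi$ are immediate: a linear combination of monomials vanishes in $V$ if and only if it pairs to zero with every monomial, and since these pairings are identical in $\widetilde{V}$, the linear relations among monomials are the same in both algebras. Hence $\Phi$ descends to a graded linear isomorphism $V\to\widetilde{V}$ that fixes ${\bf 1}$ and preserves the bilinear form. It remains to verify the homomorphism property $\Phi(\omega^{ij}_n w)=\widetilde{\omega}^{ij}_n\Phi(w)$ for all $n\in\Z$ and all $w$; since $V$ is generated by the $\omega^{ij}$, this suffices by the standard fact that a vacuum-fixing linear bijection intertwining the vertex operators of a generating set is a vertex operator algebra isomorphism. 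For $n\le 0$ the identity is built into the definition of $\Phi$. For $n\ge 2$ it follows by a bootstrap: applying the form-preserving map $\Phi$ to $(\omega^{ij}_n u|v)=(u|\omega^{ij}_{2-n}v)$ and using the already-known case $2-n\le 0$ yields $(\widetilde{\omega}^{ij}_n\Phi(u)-\Phi(\omega^{ij}_n u)\,|\,\Phi(v))=0$ for all $v$, whence equality by nondegeneracy. The remaining case $n=1$, where $\omega^{ij}_1=L^{(ij)}(0)$, follows because $\Phi$ preserves the decomposition \eqref{ethm1-2} and $L^{(ij)}(0)$ acts by the prescribed scalars $0$ and $h^{(m)}_{m+1,1}$ on the two summands. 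Finally, \eqref{e3.3-3} gives $\Phi(\omega)=\widetilde{\omega}$, so $\Phi$ is an isomorphism of vertex operator algebras, which is exactly the assertion that $V$ is determined by the structure of $V_2$.
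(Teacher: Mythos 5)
Your proposal is correct and follows essentially the same route as the paper: use the spanning set from Theorem \ref{thm1} together with the unique invariant bilinear form (guaranteed by simplicity and the OZ property), show by the adjoint relation $(\omega^{ij}_m u|v)=(u|\omega^{ij}_{-m+2}v)$ and induction on weight that the Gram matrices of corresponding monomials in $V$ and $\widetilde V$ coincide, conclude that the linear relations among monomials agree, and lift $\omega^{ij}\mapsto\widetilde\omega^{ij}$ to a vertex operator algebra isomorphism. The extra detail you supply on verifying the homomorphism property for the various ranges of $n$ merely fills in a step the paper leaves implicit; the strategy is identical.
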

\begin{proof}
	Let $\widetilde{V}$ be another simple OZ-type vertex operator algebra generated by Virasoro vectors $\widetilde{\omega}^{ij}$, $1\leq i<j\leq n$ such that each vertex subalgebra generated by the Virasoro vector $\widetilde{\omega}^{ij}$ is isomorphic to the unitary simple vertex operator algebra $L(c_m,0)$ and for distinct $1\leq i,j,l\leq n$, 
	\begin{equation}\label{ethm2-1}
	\widetilde{\omega}^{ij}_1\widetilde{\omega}^{jl}=\frac{h^{(m)}_{m+1,1}}{2}(\widetilde{\omega}^{ij}+\widetilde{\omega}^{jl}-\widetilde{\omega}^{il}).
	\end{equation}
	and 
	\begin{equation}\label{ethm2-2} 
	\widetilde{\omega}^{ij}_3\widetilde{\omega}^{ij}=\frac{c_{m}}{2}{\bf 1}, \ \   
	\widetilde{\omega}^{ij}_3\widetilde{\omega}^{jl}=\frac{c_mh_{m+1,1}^{(m)}}{8}{\bf 1}.
	\end{equation}
Let $(\cdot|\cdot)$ be the unique non-degenerate bilinear form on $\widetilde{V}$ such that $({\bf 1}|{\bf 1})=1$. It is obvious that for $1\leq i_1<j_2\leq n$, $1\leq i_2<j_2\leq n$, 
\begin{equation}\label{ethm2-3}
(\omega^{i_1j_1}|\omega^{i_2j_2})=(\widetilde{\omega}^{i_1j_1}|\widetilde{\omega}^{i_2j_2}).
\end{equation}
Since for $1\leq i<j\leq n$, $m\in\Z$,  and $u,v\in V$, $\widetilde{u}, \widetilde{v}\in\widetilde{V}$, 
\begin{equation}\label{ethm2-4}
(\omega^{ij}_mu|v)=(u|\omega^{ij}_{-m+2}v), \  (\widetilde{\omega}^{ij}_m\widetilde{u}|\widetilde{v})=(\widetilde{u}|\widetilde{\omega}^{ij}_{-m+2}\widetilde{v}).
\end{equation}
Let $u$ be a linear combination of elements of the form: $\omega^{i_{1}j_{1}}_{n_{1}}\omega^{i_{2}j_{2}}_{n_{2}}\cdots \omega^{i_{s}j_{s}}_{n_{s}}{\bf 1}$, then we  denote by $\widetilde{u}$  the same linear combination of elements of the form: $\widetilde{\omega}^{i_{1}j_{1}}_{n_{1}}\widetilde{\omega}^{i_{2}j_{2}}_{n_{2}}\cdots \widetilde{\omega}^{i_{s}j_{s}}_{n_{s}}{\bf 1}$, 
where $s\geq 0$, $n_{k}\leq 0$ and $0\leq i_{k}<j_{k}\leq n$, $k=1,2,\cdots,s$.
\vskip 0.2cm
Based on (\ref{ethm2-3}) and (\ref{ethm2-4}), by Theorem \ref{thm1} and induction assumption on the weights of $u,v\in V$ and $\widetilde{u}, \widetilde{v}\in \widetilde{V}$, we can deduce that 
\begin{equation}\label{ethm2-5}
(u|v)=(\widetilde{u}|\widetilde{v}).
\end{equation}
(\ref{ethm2-5})  implies that $u=0$ if and only if $\widetilde{u}=0$, since both $V$ and $\widetilde{V}$ are simple. Thus 
the  linear map $\phi: V_2\to \widetilde{V}_2$ defined by 
$$
\phi(\omega^{ij})=\widetilde{\omega}^{ij},  \  1\leq i<j\leq n.
$$
can be lifted to a vertex algebra homomorphism from $V$ to $\widetilde{V}$. Similarly, the linear map $\phi: \widetilde{V}_2\to {V}_2$ defined by 
$$
\widetilde{\phi}(\widetilde{\omega}^{ij})={\omega}^{ij},  \  1\leq i<j\leq n.
$$
can also be lifted to a vertex algebra homomorphism from $\widetilde{V}$ to ${V}$. It is obvious that $\phi^{-1}=\widetilde{\phi}$. Thus $V\cong \widetilde{V}$. This proves that $V$ is uniquely determined by the structure of $V_2$.
\end{proof}
\begin{cor}\label{cor1}
	 	Let $V$ be  an OZ-type  vertex operator algebra satisfying (I)-(II). Then the Zhu algebra $A(V)$ of $V$ is generated by $[\omega^{ij}]$, $1\leq i<j\leq n$.
\end{cor}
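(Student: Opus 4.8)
The plan is to show that the Zhu algebra $A(V) = V/O(V)$ is generated, as an associative algebra, by the images $[\omega^{ij}]$. The natural strategy is to combine the spanning set for $V$ furnished by Theorem \ref{thm1} with the definitions \eqref{1.1} and \eqref{1.2} of the products $u\circ v$ and $u*v$ in $A(V)$. Since $V$ is linearly spanned by monomials $\omega^{i_1j_1}_{n_1}\cdots\omega^{i_sj_s}_{n_s}{\bf 1}$ with all $n_k\leq 0$, it suffices to prove that the image in $A(V)$ of every such monomial lies in the subalgebra generated by the classes $[\omega^{ij}]$.

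\emph{First} I would recall the standard fact from \cite{Z96}, \cite{FZ92} relating the $*$-product to negative-mode actions: for a homogeneous $u$ and any $v$, the difference $u*v - \sum_{i\geq 0}\binom{\wt u}{i}u_{i-1}v$ lies in $O(V)$, and more usefully, there is a well-known congruence expressing $o(u_{-1}v)$-type elements modulo $O(V)$. Concretely, one uses that for homogeneous $a$ of weight $2$ (so each $a=\omega^{ij}$ has $\wt a = 2$), the operator $a_{-1}$ applied to $v$ satisfies $a_{-1}v \equiv a*v - 2a_{-2}v$ modulo higher relations, and iterating the relation $u\circ v\in O(V)$ one reduces negative modes $a_{-n}$ with $n\geq 2$ to combinations of $a_{-1}$ and lower-weight terms modulo $O(V)$. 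The key computational input is the standard identity that for any homogeneous $a$ and any $n\geq 1$, the element $a_{-n-1}v + (\text{binomial tail involving } a_{-n}v,\dots)$ lies in $O(V)$; this lets one rewrite every mode $\omega^{ij}_{-n}$ acting on the right as a $*$-multiplication by $[\omega^{ij}]$ plus elements of strictly lower filtration.

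\emph{The main step} is then an induction on the length $s$ and total weight of the monomial \eqref{ethm1-1}. Peeling off the outermost operator $\omega^{i_1j_1}_{n_1}$, I would use the congruence above to write
\begin{equation}\label{ezhu}
[\omega^{i_1j_1}_{n_1}w] = [\omega^{i_1j_1}] * [w] + (\text{lower-filtration terms}),
\end{equation}
where $w=\omega^{i_2j_2}_{n_2}\cdots{\bf 1}$ and the correction terms are images of monomials of strictly smaller weight, hence in the generated subalgebra by the inductive hypothesis. Since $[w]$ is in the generated subalgebra by induction on $s$, and $[\omega^{i_1j_1}]$ is a generator, the whole class lies in the subalgebra. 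The base case $s=0$ gives $[{\bf 1}]$, the identity of $A(V)$.

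\emph{The hardest part} will be controlling the correction terms in \eqref{ezhu} so that they genuinely have strictly smaller filtration (either lower weight or shorter length) — the $O(V)$ reductions produce terms $\omega^{i_1j_1}_{i-2}w$ for $i\geq 1$, and one must check that after re-expressing these via the fusion-constrained relations \eqref{e3.1}, \eqref{e3.2} (equivalently \eqref{e3.5-1}, \eqref{e3.5-1-2}) they never escape the span of shorter or lighter monomials from Theorem \ref{thm1}. Here the decomposition $V = V_{[0]}\oplus V_{[h^{(m)}_{m+1,1}]}$ established in the proof of Theorem \ref{thm1} is the decisive tool: it guarantees that the action of any $\omega^{ij}_p$ stays inside the controlled spanning set, so the induction closes cleanly.
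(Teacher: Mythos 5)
Your proposal follows essentially the same route as the paper: reduce to the spanning monomials of Theorem \ref{thm1}, induct on weight, use the $O(V)$ relations $[(\omega^{i_1j_1}_{-p-2}+2\omega^{i_1j_1}_{-p-1}+\omega^{i_1j_1}_{-p})u^2]=0$ to push the outermost mode up to $n_1\in\{-1,0\}$, and then recover $[\omega^{i_1j_1}_{0}u^2]$ and $[\omega^{i_1j_1}_{-1}u^2]$ from $[\omega^{i_1j_1}]*[u^2]$ together with the commutator identity. Two small points: your congruence should read $a_{-1}v=a*v-2a_0v-a_1v$ for $\wt a=2$ (not $a*v-2a_{-2}v$), and the correction terms $a_0v$, $a_1v$ are of strictly lower weight, so they are absorbed by the weight induction directly, without any appeal to the decomposition $V=V_{[0]}\oplus V_{[h^{(m)}_{m+1,1}]}$.
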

\begin{proof}
	The proof  is similar to that of Theorem 3.7 in \cite{JL16}. Let $A'$ be the subalgebra of $A(V)$ generated by $[\omega^{ij}]$, $0\leq i<j\leq n$. It suffices to show that for every homogeneous $u\in V$, $[u]\in A'$. We will approach it by induction on the weight of $u$. If $wt(u)=2$, it is obvious that $[u]\in A'$. Suppose that for all homogeneous $u$ such that $wt(u)\leq q-1$, we have $[u]\in A'$. Now suppose $wt(u)=q$. By Theorem \ref{thm1}, we may assume that
	$$
	u=\omega^{i_{1}j_{1}}_{n_{1}}\omega^{i_{2}j_{2}}_{n_{2}}\cdots \omega^{i_{s}j_{s}}_{n_{s}}{\bf 1},
	$$
	for some $n_{k}\leq 0$ and $0\leq i_{k}<j_{k}\leq n$, $k=1,2,\cdots,s$. Since for $p\geq 0$, $[(\omega^{i_{1}j_{1}}_{-p-2}+2\omega^{i_{1}j_{1}}_{-p-1}+\omega^{i_{1}j_{1}}_{-p})
	\omega^{i_{2}j_{2}}_{n_{2}}\cdots \omega^{i_{s}j_{s}}_{n_{s}}{\bf 1}]=[0]$, we may assume that $-1\leq n_{1}\leq 0$. Denote $u^2=\omega^{i_{2}j_{2}}_{n_{2}}\cdots \omega^{i_{s}j_{s}}_{n_{s}}{\bf 1}$. Then by inductive assumption, $[u^2], [\omega^{i_{1}j_{1}}_{1}u^2]\in A'$.  Since
	$$
	[(\omega^{i_{1}j_{1}}_{0}+\omega^{i_{1}j_{1}}_{1})u^2]=[\omega^{i_{1}j_{1}}]*[u^2]-[u^2]*[\omega^{i_{1}j_{1}}],$$
	it follows that $[\omega^{i_{1}j_{1}}_{0}u^2]\in A'$. Then by the fact that
	$$[\omega^{i_{1}j_{1}}]*[u^2]=[(\omega^{i_{1}j_{1}}_{-1}+2\omega^{i_{1}j_{1}}_{0}+\omega^{i_{1}j_{1}}_{1})u^2],$$
	we have $[u]\in A'$. Therefore $A(V)=A'$.
\end{proof}
We now  recall the  vertex operator algebra $M(A_{n-1})$ for $n\geq 2$ from \cite{LS08},   \cite{JL16}, \cite{CL07},  \cite{LSY07},  \cite{JLY19}, \cite{JLY23}, etc. 

\vskip 0.2cm 
Let $L(sl_2,k)$ be the simple affine vertex operator algebra associated to the simple Lie algebra $sl_2$ and level $k$.  For $n\geq 2$, let $L(sl_2, 1)^{\otimes n}$ be the tensor product of the vertex operator algebra $L(sl_2,1)$. It is known that 
$L(sl_2,1)^{\otimes n}$ is isomorphic to the lattice vertex operator algebra $V_{L}$, where $L=\oplus_{i=1}^n\Z\alpha_i$ such that $(\alpha_i|\alpha_j)=2\delta_{ij}$. For $1\leq i,j\leq n$, $i\neq j$, denote
\begin{equation}\label{ethm3-1}
\omega^{ij}=\frac{1}{16}(\al^i-\al^j)(-1)(\al^i-\al^j)(-1){\bf 1}-\frac{1}{4}(e^{\al^i-\al^j}+e^{-\al^i+\al^j}).
\end{equation}
 It is obvious that $L(sl_2,n)$ can be diagonally embedded into $L(sl_2, 1)^{\otimes n}$.  Denote by $M(A_{n-1})$ the commutant of $L(sl_2,n)$ in $L(sl_2, 1)^{\otimes n}$.  Then $M(A_{n-1})$  is generated by the $\omega^{ij}$, $1\leq i<j\leq n$, given by (\ref{ethm3-1}), such that $\omega^{ij}$ generates a simple Virasoro vertex operator algebra isomorphic to $L(1/2,0)$ and for distinct $1\leq i,j,l\leq n$, $\omega^{ij}, \omega^{jl}, \omega^{il}$ satisfy (\ref{e3.1}) and (\ref{e3.2}) with $m=1$.  We now assume that $n=3$. For distinct $1\leq i,j\leq 3$, let $\omega^{ij}\in M(A_2)$ be given by (\ref{ethm3-1}). 
 Then \cite{LS08}
 \begin{equation}\label{ethm3-2}
 M(A_2)=L^{(ij)}(1/2, 0)\otimes L(7/10, 0)\oplus L^{(ij)}(1/2, 1/2)\otimes L(7/10, 3/2).
 \end{equation}
 The conformal vector of $M(A_2)$ is 
 \begin{equation}\label{ethm3-3}
 \omega=\frac{4}{5}(\omega^{12}+\omega^{23}+\omega^{13}).
 \end{equation}
 For $1\leq i,j\leq 3$, set
 $$
 \widetilde{\omega}^{ij}=\omega-\omega^{ij}.
 $$
 Then $M(A_2)$ is generated by $\widetilde{\omega}^{ij}$, $1\leq i<j\leq 3$, and  by (\ref{ethm3-2}) and (\ref{ethm3-3}), $\widetilde{\omega}^{ij}$ generates a simple Virasoro vertex operator isomorphic to $L(7/10,0)$. Furthermore, it is easy to check that for distinct $1\leq i,j,l\leq 3$,
  $$
  	\widetilde{\omega}^{ij}_3\widetilde{\omega}^{ij}=\frac{c_{m}}{2}{\bf 1}, 
\widetilde{\omega}^{ij}_3\widetilde{\omega}^{jl}=\frac{c_mh_{m+1,1}^{(m)}}{8}{\bf 1}
  $$
 with $m=2$. Then by Theorem \ref{thm1} we immediately have
\begin{thm}\label{thm3}
	Let $V$ be  a vertex operator algebra satisfying (I)-(II). If $n=3$ and $m=2$, then $V\cong M(A_2)$.
\end{thm}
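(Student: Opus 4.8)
The plan is to realize $M(A_2)$ as a concrete vertex operator algebra satisfying (I)--(II) with $n=3$ and $m=2$, and then to apply the uniqueness result of Theorem~\ref{thm2}. Indeed, once we know that both $V$ and $M(A_2)$ are simple OZ-type vertex operator algebras generated by Virasoro vectors of central charge $c_2=7/10$ obeying the same defining relations (\ref{e3.1})--(\ref{e3.2}) for $m=2$, Theorem~\ref{thm2} forces them to be isomorphic.

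First I would verify that $M(A_2)$, viewed as the vertex operator algebra generated by the three vectors $\widetilde{\omega}^{ij}=\omega-\omega^{ij}$ (rather than by the Ising vectors $\omega^{ij}$), satisfies conditions (I) and (II) with $m=2$. Its simplicity, its being of OZ type, and its being generated by its Griess algebra all follow from its construction as the commutant of $L(sl_2,3)$ in $L(sl_2,1)^{\otimes 3}$ together with the decomposition (\ref{ethm3-2}); in particular (\ref{ethm3-2}) shows that each $\widetilde{\omega}^{ij}$ generates a copy of $L(7/10,0)=L(c_2,0)$, which is unitary and simple, that $M(A_2)_1=0$, and that $\dim M(A_2)_2=3$, so that the three vectors $\widetilde{\omega}^{ij}$ do span the whole Griess algebra. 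Note that for $n=3$ there are no four distinct indices, so the vanishing relations $\omega^{ij}_p\omega^{kl}=0$ appearing in (\ref{e3.2}) are vacuous here; the remaining relations in (\ref{e3.2}) for the $\widetilde{\omega}^{ij}$ are exactly those recorded just above the statement of the theorem.

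It then remains to check the cross relation (\ref{e3.1}) for the $\widetilde{\omega}^{ij}$ with $m=2$, which is a short Griess-algebra computation. Writing $\omega=\frac{4}{5}(\omega^{12}+\omega^{23}+\omega^{13})$ as in (\ref{ethm3-3}) and using the $m=1$ relations $\omega^{ij}_1\omega^{jl}=\frac{1}{4}(\omega^{ij}+\omega^{jl}-\omega^{il})$ together with $\omega_1\omega^{ij}=L(0)\omega^{ij}=2\omega^{ij}$, one expands $\widetilde{\omega}^{ij}_1\widetilde{\omega}^{jl}=(\omega-\omega^{ij})_1(\omega-\omega^{jl})$ and re-expresses the answer through $\omega=\frac{4}{7}(\widetilde{\omega}^{12}+\widetilde{\omega}^{23}+\widetilde{\omega}^{13})$ to obtain $\widetilde{\omega}^{ij}_1\widetilde{\omega}^{jl}=\frac{3}{4}(\widetilde{\omega}^{ij}+\widetilde{\omega}^{jl}-\widetilde{\omega}^{il})$. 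Since $\frac{1}{2}h^{(2)}_{3,1}=\frac{3}{4}$, this is precisely (\ref{e3.1}) for $m=2$, so $M(A_2)$ satisfies (I)--(II).

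With this in hand, both $V$ and $M(A_2)$ satisfy (I)--(II) for $n=3$, $m=2$, so their Griess algebras are each the non-degenerate Matsuo algebra $B_{h^{(2)}_{3,1},\,c_2}(\mathfrak{S}_3)$, and the assignment $\omega^{ij}\mapsto\widetilde{\omega}^{ij}$ is an isomorphism of Griess algebras from $V_2$ to $M(A_2)_2$. Applying Theorem~\ref{thm2} (whose proof in turn relies on the spanning set furnished by Theorem~\ref{thm1}) then yields $V\cong M(A_2)$. I expect the only genuine obstacle to lie in confirming the structural facts that $M(A_2)$ is simple, of OZ type, and has a three-dimensional Griess algebra spanned by the $\widetilde{\omega}^{ij}$; these are properties of the commutant $M(A_2)$ drawn from its explicit realization in \cite{LS08}, after which the numerical verification above and the appeal to Theorem~\ref{thm2} are immediate.
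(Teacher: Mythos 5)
Your proposal is correct and follows essentially the same route as the paper: verify that $M(A_2)$, generated by the complementary vectors $\widetilde{\omega}^{ij}=\omega-\omega^{ij}$, satisfies (I)--(II) with $n=3$, $m=2$, and then invoke the uniqueness theorem. In fact you are slightly more careful than the paper, which records only the $\widetilde{\omega}^{ij}_3$-products and leaves the cross relation $\widetilde{\omega}^{ij}_1\widetilde{\omega}^{jl}=\frac{3}{4}(\widetilde{\omega}^{ij}+\widetilde{\omega}^{jl}-\widetilde{\omega}^{il})$ unstated (your computation of it checks out), and you correctly identify Theorem~\ref{thm2} as the result being applied where the paper cites Theorem~\ref{thm1}.
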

\begin{rmk}
	It is interesting to give more  examples of vertex operator algebras satisfying (I)-(II). 
	\end{rmk}

Let $V$ be an OZ-type vertex operator algebra satisfying (I) and (II).  For $1\leq i<j\leq n$, let $L^{(ij)}(c_m,0)$ be the simple Virasoro vertex operator algebra generated by $\omega^{ij}$. By the assumption, 
$$
V=V_{[0]}\oplus V_{[h^{(m)}_{m+1,1}]}, 
$$
where $V_{[0]}$ is the direct sum of irreducible $L^{(ij)}(c_m,0)$-modules isomorphic to $L^{(ij)}(c_m,0)$, and $V_{[h^{(m)}_{m+1,1}]}$ is the direct sum of irreducible $L^{(ij)}(c_m,0)$-modules isomorphic to $L^{(ij)}(c_m, h^{(m)}_{m+1,1})$.  By the fusion rules of the Virasoro algebra $L(c_m,0)$ \cite{W93}, 
$$
 V_{[h^{(m)}_{m+1,1}]}\boxtimes  V_{[h^{(m)}_{m+1,1}]}\subseteq V_{[0]}.
$$
Define linear map $\sigma^{ij}: V\to V$ by 
\begin{equation}\label{eauto1}
\sigma^{ij}_{V_{[0]}}={\rm id}_{V_{[0]}}, \ \sigma^{ij}_{V_{[h^{(m)}_{m+1,1}]}}=-{\rm id}_{V_{[h^{(m)}_{m+1,1}]}}.
\end{equation}
Then by (\ref{eauto1}), $\sigma^{ij}$ is an involution of $V$. 
\begin{rmk}
	For $m=1$, the involution $\sigma^{ij}$ was first introduced  in \cite{Mi96}, usually called Miyamoto involution of $\sigma$-type. 
\end{rmk}
The following result is to characterize the automorphism group of $V$.
\begin{thm}\label{thm4}
	Let $V$ be an OZ-type vertex operator algebra satisfying (I) and (II). Then ${\rm Aut}V$ is isomorphic to the symmetric group ${\mathfrak S}_n$.
\end{thm}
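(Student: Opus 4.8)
The plan is to sandwich $\Aut V$ between two copies of ${\mathfrak S}_n$ and to match them up. First I would produce the lower bound ${\mathfrak S}_n\hookrightarrow\Aut V$. Each $\pi\in{\mathfrak S}_n$ permutes the index set, hence the generators, via $\omega^{ij}\mapsto\omega^{\pi(i)\pi(j)}$; since the relations (I)--(II), in particular (\ref{e3.1}) and (\ref{e3.2}), are stated uniformly for all distinct index tuples, the permuted family $\{\omega^{\pi(i)\pi(j)}\}$ satisfies exactly the same relations. Running the lifting argument of Theorem \ref{thm2} with $\widetilde V=V$ and $\widetilde\omega^{ij}=\omega^{\pi(i)\pi(j)}$ shows that this assignment extends to a vertex operator algebra automorphism $\iota(\pi)$ of $V$. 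The resulting map $\iota:{\mathfrak S}_n\to\Aut V$ is injective because for $n\geq 3$ the action of ${\mathfrak S}_n$ on $2$-subsets of $\{1,\dots,n\}$ is faithful.

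Next I would pass to the Griess algebra. Any $g\in\Aut V$ fixes ${\bf 1}$ and $\omega$, commutes with $L(0)=\omega_1$, and hence preserves $V_2$ together with its product $u_1v$ and its invariant form $(u|v)$ (where $u_3v=(u|v){\bf 1}$ for $u,v\in V_2$). This gives a restriction homomorphism $\rho:\Aut V\to\Aut(V_2)$ into the form-preserving automorphisms of the Griess algebra; $\rho$ is injective because $V$ is generated by $V_2$, so an automorphism trivial on $V_2$ is trivial. It therefore suffices to compute $\Aut(V_2)$.

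The heart of the argument is to show that every $g\in\Aut(V_2)$ preserves the distinguished set $\Omega=\{\omega^{ij}\mid 1\leq i<j\leq n\}$ setwise. For this I would characterize $\Omega$ intrinsically. Each $\tfrac12\omega^{ij}$ is an idempotent of the Griess product with $(\omega^{ij}|\omega^{ij})=\tfrac12 c_m$, and by (\ref{e3.5-1}) and (\ref{e3.5-1-2}) the adjoint $\omega^{ij}_1=L^{(ij)}(0)$ is diagonalizable on $V_2$ with spectrum $2$ on $\omega^{ij}$ (with multiplicity one), $\alpha=h^{(m)}_{m+1,1}$ on the $n-2$ vectors $\omega^{jl}-\omega^{il}$, and $0$ on the remaining subspace. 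The key step is to prove that these data---idempotency, the value of the form, and the fusion spectrum $\{1,0,\alpha/2\}$ of $\mathrm{ad}(\tfrac12\omega^{ij})$ together with the one-dimensionality of its top eigenspace---single out exactly the members of $\Omega$ among all vectors of $V_2$; equivalently, that the $\omega^{ij}$ are precisely the simple Virasoro vectors of central charge $c_m$ lying in $V_2$. Granting this, since $g$ preserves the product, the form, and hence adjoint spectra, it must satisfy $g(\Omega)=\Omega$.

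Finally I would identify the induced permutation group. A $g$ fixing $\Omega$ setwise induces a permutation of the $2$-subsets $\{i,j\}$; because $g$ respects the products in (\ref{e3.1})--(\ref{e3.2}), it preserves the incidence of ``triangles'' $\{\omega^{ij},\omega^{jl},\omega^{il}\}$ (on which the products are nonzero) versus disjoint pairs (on which they vanish). Thus $g$ induces an automorphism of the Fischer space of transpositions of ${\mathfrak S}_n$, whose automorphism group is ${\mathfrak S}_n$ for all $n\geq 3$. This yields a homomorphism $\theta:\Aut(V_2)\to{\mathfrak S}_n$, injective because $\Omega$ spans $V_2$. The composite $\theta\circ\rho\circ\iota:{\mathfrak S}_n\to{\mathfrak S}_n$ is the identity, so all three maps are isomorphisms and $\Aut V\cong{\mathfrak S}_n$. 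I expect the main obstacle to be the intrinsic characterization of $\Omega$: ruling out exotic idempotents (equivalently, exotic simple $c_m$-Virasoro vectors) requires solving the idempotency and eigenvalue equations over the non-degenerate Matsuo algebra $B_{\alpha,\beta}({\mathfrak S}_n)$ and checking that no vector besides the $\omega^{ij}$ meets every constraint; a secondary point needing care is that for the small cases $n=3,4$ the extra symmetries of the bare triangular graph are killed by the line incidence, so the geometry automorphism group is still ${\mathfrak S}_n$.
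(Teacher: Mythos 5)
Your proposal is correct in outline and reaches the theorem by the same overall strategy as the paper (embed $\mathfrak{S}_n$ into $\Aut V$, then show every automorphism permutes the set $\Omega=\{\omega^{ij}\}$ and hence lands in $\mathfrak{S}_n$), but both halves are executed differently. For the lower bound the paper does not permute indices directly: it constructs the Miyamoto-type involutions $\sigma^{ij}$ acting as $+1$ on $V_{[0]}$ and $-1$ on $V_{[h^{(m)}_{m+1,1}]}$ (using the fusion rule $V_{[h]}\boxtimes V_{[h]}\subseteq V_{[0]}$), computes $\sigma^{ij}\omega^{il}=\omega^{jl}$, and verifies the braid-type relations (\ref{eauto4}) to get $\mathfrak{S}_n\subseteq\Aut V$; your route --- feeding $\widetilde\omega^{ij}=\omega^{\pi(i)\pi(j)}$ into the lifting argument of Theorem \ref{thm2} --- is equally valid and arguably more economical, though it produces only the permutation automorphisms and not the $\sigma^{ij}$ themselves, which the paper also wants for the 3-transposition picture. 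For the upper bound the paper sends $\omega^{ij}$ to the root $\epsilon_i-\epsilon_j$ of $A_{n-1}$ and invokes the automorphism group of the root system, whereas you work with the triangular graph enriched by the line (triangle) incidence, i.e.\ the Fischer space of transpositions. Your version is actually the more careful one: since $\omega^{ij}=\omega^{ji}$, the paper's map to $\Delta$ is only defined up to sign, and for $n=4$ the bilinear form alone (which only sees whether two pairs intersect) leaves the extra octahedral symmetry $\omega^{12}\leftrightarrow\omega^{34}$ alive; it is precisely the product relation (\ref{e3.1}) --- your ``line incidence'' --- that kills it. You correctly flag this.

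Both your argument and the paper's rest on the same unproved pivot: that $\Omega$ is \emph{intrinsically} characterized inside $V_2$, so that any automorphism permutes it. The paper disposes of this with the bare assertion ``Notice that $\{\omega^{ij}\}$ exhaust all the Virasoro vectors of central charge $c_m$,'' while you honestly name it as the main obstacle (classifying idempotents with the prescribed norm and adjoint spectrum in the Matsuo algebra $B_{\alpha,\beta}(\mathfrak{S}_n)$) without carrying it out. So your proposal is not more incomplete than the paper's own proof on this point, but be aware that neither text actually proves it; if you want a fully rigorous argument this is the step you would still need to supply.
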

\begin{proof}
For $1\leq i<j\leq n$, let $\sigma^{ij}$ be the involution of $V$ defined by (\ref{eauto1}). For $k, l\neq i,j$, since $\omega^{il}-\omega^{jl}\in V_{[h^{(m)}_{m+1,1}]}$, $\omega^{ijl}-\omega^{ij}$, 
$\omega^{kl}\in V_{[0]}$, it follows that 
\begin{equation}\label{eauto2}
\sigma^{ij}(\omega^{il}-\omega^{jl})=-(\omega^{il}-\omega^{jl}),  \ \sigma^{ij}(\omega^{ijl}-\omega^{ij})=\omega^{ijl}-\omega^{ij},  \ \sigma^{ij}\omega^{kl}=\omega^{kl},
\end{equation}
where $\omega^{ijl}$ is defined by (\ref{ethm2-6}).  Notice that $\omega^{ij}\in V_{[0]}$, thus for $l\neq i,j$, 
\begin{equation}\label{eauto3}
\sigma^{ij}\omega^{il}=\omega^{jl}, \ \sigma^{ij}\omega^{jl}=\omega^{il}.
\end{equation}
Then it is easy to check that for distinct $1\leq i,j,k,l\leq n$, 
\begin{equation}\label{eauto4}
\sigma^{ij}\sigma^{kl}=\sigma^{kl}\sigma^{ij}, \ \sigma^{ij}\sigma^{jk}\sigma^{ij}=\sigma^{ik}=\sigma^{ik}\sigma^{ij}\sigma^{jk}.
\end{equation}
This proves that the automorphic subgroup of $V$ generated by $\omega^{ij}, 1\leq i<j\leq n$ is isomorphic to the symmetric group ${\mathfrak S}_n$. Thus
${\mathfrak S}_n\subseteq {\rm Aut}V$. 

\vskip 0.2cm
 Conversely, let $\sigma\in{\rm Aut}V$, then $\sigma(V_2)=V_2$, and for every Virasoro vector $\omega^{ij}$, $\sigma(\omega^{ij})$ is still a Virasoro vector of central charge $c_m$. Since $V$ is generated by $V_2$, $\sigma$ is uniquely determined by $\sigma|_{V_2}$. 
 
 \vskip 0.2cm
 \noindent
  Let $\Delta=\{\epsilon_i-\epsilon_j| 1\leq i, j\leq n, i\neq j\}$ be the root system of $sl_n$ such that $(\epsilon_i|\epsilon_j)=\delta_{ij}$.  Notice that $\{\omega^{ij}, 1\leq i<j\leq n\}$ exhaust all the Virasoro vectors of central charge $c_m$, and each $\omega^{ij}$ corresponds to the root $\epsilon_i-\epsilon_j$. Since $\sigma(\omega^{ij})$ is still a Virasoro vector, it follows that there exists the corresponding $\widetilde{\sigma}: \Delta\to \Delta$ such that 
  $$
  \sigma(\omega^{ij})=\omega^{kl}, \ 
  $$
 if and only if  $\widetilde{\sigma}(\epsilon_i-\epsilon_j)=\epsilon_k-\epsilon_l$.  By (\ref{e3.2}), 
 $$
 (\epsilon_i-\epsilon_j|\epsilon_r-\epsilon_s)=(\widetilde{\sigma}(\epsilon_i-\epsilon_j)|\widetilde{\sigma}(\epsilon_r-\epsilon_s)).
 $$
 This means that $\widetilde{\sigma}$ is an automorphism of the root system $\Delta$. It is known that the automorphic group of $\Delta$ is isomorphic to ${\mathfrak S}_n$. This means that $\sigma\in{\mathfrak S}_n$.  Thus ${\rm Aut}V={\mathfrak  S}_n$.
\end{proof}

\section{Conditions for Positivity}

Let $V$ be an OZ-type vertex operator algebra satisfying (I)-(II).  Since $V$ is simple and OZ type, it follows from 
\cite{Li94} that  there is a unique non-degenerate bilinear form on $V$ such that 
$$({\bf 1}|{\bf 1})=1$$
and 
$$
(v|Y(u,z)w)=(Y(e^{zL(1)}(-z^{-2})^{L(0)}u,z^{-1})v|w),
$$
for $u,v,w\in V$.
For $\omega^{i_1j_1}_{m_1}\cdots \omega^{i_sj_s}_{m_s}{\bf 1}\in V$, and $a\in\C$,  define 
$$\sigma(a\omega^{i_1j_1}_{m_1}\cdots \omega^{i_sj_s}_{m_s}{\bf 1})=\bar{a}\omega^{i_1j_1}_{m_1}\cdots \omega^{i_sj_s}_{m_s}{\bf 1}.
$$
Then $\sigma$ can be extended to a conjugate involution of $V$. 
 We have  the associated $\sigma$-invariant Hermitian form $(\cdot|\cdot)_H$ on $V$ such that 
\begin{equation}\label{e3.4}
(au|bv)_H=a\bar{b}(u|v),
\end{equation}
for $a, b\in\C$, and $u=\omega^{i_1j_1}_{m_1}\cdots \omega^{i_sj_s}_{m_s}{\bf 1}$, $v=\omega^{r_1l_1}_{n_1}\cdots \omega^{r_tl_t}_{n_t}{\bf 1}$, where $1\leq i_k< j_k\leq n, 1\leq r_p<l_p\leq n$, $m_k, n_p\in\Z_{\geq 0}$, $k=1, 2,\cdots, s,  p=1, 2, \cdots, t$.  It is obvious that
\begin{equation}\label{e3.5}
(\omega^{ij}_nu|v)_H=(u|\omega^{ij}_{-n+2}v)_H.
\end{equation}

We  have the following proposition.
\begin{prop} Let $V$ be a vertex operator algebra satisfying (I) and (II), and $n\geq 3$, $m\geq 2$. Then the restriction of Hermitian form $(\cdot|\cdot)_{H}$ on $V_2$ is positive definite if and only if $m\leq 3$ if $n=3$, and $m=2$ if $n\geq 4$. 
\end{prop}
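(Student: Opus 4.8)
The plan is to reduce everything to the positive-definiteness of an explicit $\binom{n}{2}\times\binom{n}{2}$ Gram matrix and to diagonalize that matrix by exploiting the $\mathfrak{S}_n$-symmetry of the index set. First I would record the matrix of $(\cdot|\cdot)_H$ on $V_2$ relative to the spanning set $\{\omega^{ij}\mid 1\le i<j\le n\}$. By \eqref{e3.4} we have $(\omega^{ij}|\omega^{kl})_H=(\omega^{ij}|\omega^{kl})$ on these real basis vectors, so the Hermitian Gram matrix is a real symmetric matrix; and by \eqref{e3.3-1}, \eqref{eq3.3-2}, together with $\omega^{ij}_3\omega^{kl}=0$ for disjoint pairs (from \eqref{e3.2}), its entries are $a:=\tfrac{c_m}{2}$ on the diagonal, $b:=\tfrac{c_m h^{(m)}_{m+1,1}}{8}$ when the two pairs share exactly one index, and $0$ when the two pairs are disjoint. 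Hence this matrix equals $a(I+tA)$, where $A$ is the adjacency matrix of the graph whose vertices are the $2$-subsets of $\{1,\dots,n\}$, two being joined precisely when they meet in one point (the line graph of $K_n$), and where $t:=b/a=\tfrac{h^{(m)}_{m+1,1}}{4}=\tfrac{m(m+1)}{16}$. Since $c_m>0$ for $m\ge 2$ we have $a>0$, so the restricted form is positive definite if and only if $1+t\lambda>0$ for every eigenvalue $\lambda$ of $A$.

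Next I would diagonalize $A$ by hand. Encoding a vector of $V_2$ by the symmetric array $(c_{ij})$ with zero diagonal, a direct computation gives $(AC)_{ij}=r_i+r_j-2c_{ij}$ with $r_i=\sum_k c_{ik}$. This exhibits three pairwise orthogonal eigenspaces: the line spanned by the all-ones array, with eigenvalue $2(n-2)$; the space of arrays $c_{ij}=x_i+x_j$ with $\sum_i x_i=0$, of dimension $n-1$ and eigenvalue $n-4$; and the space of arrays with every row sum equal to zero, of dimension $\tfrac{n(n-3)}{2}$ and eigenvalue $-2$. Their dimensions sum to $\binom{n}{2}$, so these exhaust $V_2$ (in particular the $\omega^{ij}$ are a basis and the form is non-degenerate, consistent with the non-degenerate Matsuo-algebra structure of $V_2$). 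Alternatively one may simply quote the well-known spectrum $\{2(n-2),\,n-4,\,-2\}$ of the triangular graph $T(n)$.

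Finally I would read off the constraints. The eigenvalue $2(n-2)$ is positive and imposes nothing; for $n\ge 4$ the eigenvalue $n-4\ge 0$ likewise imposes nothing, and the binding eigenvalue is $-2$, giving $1-2t>0$, i.e.\ $m(m+1)<8$; for $n=3$ the $-2$ eigenspace has dimension $0$ and the only negative eigenvalue is $n-4=-1$, giving the weaker condition $1-t>0$, i.e.\ $m(m+1)<16$. Solving these inequalities over the integers $m\ge 2$ yields $m=2$ when $n\ge 4$, and $m\in\{2,3\}$ (that is, $m\le 3$) when $n=3$, which is exactly the assertion of the proposition.

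I do not expect a genuine obstacle here: once the Gram matrix is recognized as $a(I+tA)$, the statement is purely linear algebra. The one place demanding care is the degenerate case $n=3$, where the line graph collapses to $T(3)=K_3$ and the eigenvalue $-2$ disappears; this is exactly why the looser bound $t<1$ rather than $t<\tfrac12$ controls positivity, and hence why $m=3$ survives for $n=3$ but is excluded once $n\ge 4$.
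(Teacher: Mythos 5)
Your proof is correct, and it takes a genuinely different route from the paper's. The paper treats $n=3$ by writing out the $3\times 3$ Gram matrix, and then handles $n\ge 4$ by induction on $n$: it decomposes $V_2$ relative to $L^{(12)}(c_m,0)$ into $\C\omega^{12}\oplus U_2\oplus\sum_k\C(\omega^{1k}-\omega^{2k})\oplus\sum_k\C(\omega^{1k}+\omega^{2k}-\tfrac12 h^{(m)}_{m+1,1}\omega^{12})$, uses primarity to get orthogonality between the blocks, and computes the determinants of the two resulting $(s-2)\times(s-2)$ matrices $B$ and $C$ for $3\le s\le n$. You instead observe at the outset that the full Gram matrix is $\tfrac{c_m}{2}(I+tA)$ with $t=\tfrac{m(m+1)}{16}$ and $A$ the adjacency matrix of the triangular graph $T(n)$, and read off positivity from the spectrum $\{2(n-2),\,n-4,\,-2\}$ with multiplicities $1$, $n-1$, $\tfrac{n(n-3)}{2}$; this handles all $n\ge 3$ uniformly, avoids the induction, and makes transparent why the threshold jumps from $t<1$ to $t<\tfrac12$ exactly at $n=4$ (the $-2$ eigenspace is empty for $n=3$). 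One small point worth tightening: condition (II) only says the $\omega^{ij}$ \emph{span} $V_2$, and your parenthetical deduces that they form a basis from a decomposition of the coefficient space rather than of $V_2$ itself. This does not affect the conclusion --- if some eigenvalue of the Gram matrix is negative, its eigenvector $c$ cannot lie in the relation space (which is contained in the kernel of the Gram matrix), so $\sum c_{ij}\omega^{ij}$ is a genuine nonzero vector of negative norm; and if all eigenvalues are positive the Gram matrix is nonsingular, which forces linear independence and positive definiteness --- but it is worth saying this explicitly rather than asserting the basis property from the array-space decomposition.
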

\begin{proof}  
	 Let
	 $u^1=\omega^{12}, u^2=\omega^{23}, u^3=\omega^{13}$, $a_{ij}=(u^i|u^j)_H$,  and $A=(a_{ij})_{3\times 3}$. Then 
	$$
	A=\frac{m(m+5)}{2(m+2)(m+3)}\left(\begin{smallmatrix} 1 & \frac{m(m+1)}{16} & \frac{m(m+1)}{16} \\
	\frac{m(m+1)}{16} & 1 & \frac{m(m+1)}{16}\\
	\frac{m(m+1)}{16} & \frac{m(m+1)}{16} & 1\\
	\end{smallmatrix}\right)
	$$
It is easy to see that if $A$ is positive-definite, then $m\leq 3$, and for $n=3$, $A$ is positive-definite if and only if $m\leq 3$.

 We now assume that $n\geq 4$. 
Suppose the statement is true for  $n-1$ case, we now consider the $n$ case. 
 We consider $V$ as a module of $L^{(12)}(c_m,0)$. Then $V$ is completely reducible.

Let $U_2$ be the subspace of $V_2$ linearly spanned by $\omega^{ij}, 3\leq i<j\leq n$. Since 
$$
\omega^{12}_p\omega^{kl}=0, 
$$
for $p\geq 2$, $3\leq k<l\leq n$, it follows that any element in $U_2$ is a primary vector of $L^{(12)}(c_m,0)$ of weight 0. In particular, 
$$
(\omega^{12}|U_2)=0. 
$$
By inductive assumption, the Hermitian form $(\cdot|\cdot)_H$ restricted on $U_2$ is positive definite.  So  there exists an orthonormal basis in $U_2$. Notice that
$$
(\omega^{12}|\omega^{1j}-\omega^{2j})=0, \   (\omega^{kl}|\omega^{1j}-\omega^{2j})=0, 
$$
for $3\leq j\leq n$, $3\leq k<l\leq  n$, and $\omega^{1k}-\omega^{2k}$ for $3\leq k\leq n$ are primary vectors of $L^{(12)}(c_m,0)$ of weight $h_{m+1,1}^{(m)}$. It is easy to see that 
$$
V_2={\mathbb C}\omega^{12}\oplus U_2\oplus \sum\limits_{i=3}^n{\mathbb C}(\omega^{1k}-
\omega^{2k})\oplus \sum\limits_{i=3}^n{\mathbb C}(\omega^{1k}+\omega^{2k}-\frac{1}{2}h_{m+1,1}^{(m)}\omega^{12}), 
$$
and $\omega^{1k}+\omega^{2k}-\frac{1}{2}h_{m+1,1}^{(m)}\omega^{12}$ for $3\leq k\leq n$ are primary vectors of $L^{(12)}(c_m,0)$ of weight 0. So
$$
(\omega^{1k}-\omega^{2k}|\omega^{1j}+\omega^{2j}-\frac{1}{2}h_{m+1,1}^{(m)}\omega^{12})=0,
$$
for $3\leq k,j\leq n$. Notice that for $3\leq k, l\leq n$, $k\neq l$, 
\begin{equation}\label{e3.4-1}
(\omega^{1k}-\omega^{2k}|\omega^{1k}-\omega^{2k})=\frac{m(m+5)}{(m+2)(m+3)}(1-\frac{m(m+1)}{16}), 
\end{equation}
\begin{equation}\label{e3.4-2}
(\omega^{1k}-\omega^{2k}|\omega^{1l}-\omega^{2l})=\frac{m^2(m+1)(m+5)}{16(m+2)(m+3)},
\end{equation}
\begin{eqnarray}
& (\omega^{1k}+\omega^{2k}-\frac{1}{2}h_{m+1,1}^{(m)}\omega^{12}|\omega^{1k}+\omega^{2k}-\frac{1}{2}h_{m+1,1}^{(m)}\omega^{12})\nonumber\\
& \nonumber\\
&=\frac{m(m+5)}{(m+2)(m+3)}+\frac{m^2(m+1)(m+5)}{16(m+2)(m+3)}[1-\frac{m(m+1)}{8}]\label{e3.4-3},
\end{eqnarray}
\begin{equation}\label{e3.4-4}
(\omega^{1k}+\omega^{2k}-\frac{1}{2}h_{m+1,1}^{(m)}\omega^{12}|\omega^{1l}+\omega^{2l}-\frac{1}{2}h_{m+1,1}^{(m)}\omega^{12})=\frac{m^2(m+1)(m+5)}{16(m+2)(m+3)}[1-\frac{m(m+1)}{8}].
\end{equation}
For $3\leq s\leq n$,  $3\leq k,l\leq s$, let 
$$
b_{kl}=(\omega^{1k}-\omega^{2k}|\omega^{1l}-\omega^{2l}), $$
$$ \ c_{kl}=(\omega^{1k}+\omega^{2k}-\frac{1}{2}h_{m+1,1}^{(m)}\omega^{12}|\omega^{1l}+\omega^{2l}-\frac{1}{2}h_{m+1,1}^{(m)}\omega^{12}),
$$
and
$$
B=(b_{kl})_{(s-2)\times (s-2)}, \ \ C=(c_{kl})_{(s-2)\times (s-2)}.
$$
Then 
$$
\begin{array}{ll}
\det B&=[\frac{m(m+5)}{(m+2)(m+3)}]^{s-2}\det \left(\begin{smallmatrix} 1-\frac{m(m+1)}{16} & \frac{m(m+1)}{16} & \cdots &  \frac{m(m+1)}{16} \\
\frac{m(m+1)}{16} & 1-\frac{m(m+1)}{16} &  \cdots & \frac{m(m+1)}{16}\\
\cdots & \cdots & \cdots & \cdots \\
\frac{m(m+1)}{16} & \frac{m(m+1)}{16} & \cdots &1-\frac{m(m+1)}{16}\\
\end{smallmatrix}\right)\\\\
&=[\frac{m(m+5)}{(m+2)(m+3)}]^{s-2}[1-\frac{m(m+1)}{8}]^{s-3}[1+\frac{(s-4)m(m+1)}{16}],
\end{array}
$$
and
$$
\det C=[\frac{m(m+5)}{(m+2)(m+3)}]^{s-3}[\frac{m(m+5)}{(m+2)(m+3)}+\frac{(s-2)m^2(m+1)(m+5)}{16(m+2)(m+3)}(1-\frac{m(m+1)}{8})].
$$
$(\cdot|\cdot)_{H}$ on $V_2$ is positive-definite  if and only if $\det B>0$ and $\det C>0$ for all $3\leq s\leq n$. Since $n\geq 4$, we see that $(\cdot|\cdot)_{H}$ on $V_2$ is positive-definite  if and only if $1\leq m\leq 2$.
\end{proof}
\begin{thm} Let $V$ be a vertex operator algebra satisfying the conditions (I) and (II). If $V$ is unitary, 
	then $V$ is $C_2$-cofinite.
\end{thm}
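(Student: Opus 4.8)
The plan is to exploit unitarity twice: first to pin the central charge $c_m$ (equivalently $m$) down to a very short list, and then to recognize each surviving $V$ as a known $C_2$-cofinite vertex operator algebra. The engine is the following remark. Since $V$ is of OZ type we have $V_1=0$, hence $\omega_2 v=0$ for every $v\in V_2$; thus every Virasoro vector $\nu\in V_2$ is a primary vector of weight $2$, and by the same computation that gives (\ref{e3.5}) its modes $L^{\nu}(k)=\nu_{k+1}$ satisfy $(L^{\nu}(k)u|v)_H=(u|L^{\nu}(-k)v)_H$. Consequently, when $V$ is unitary the Virasoro algebra generated by $\nu$ acts on the positive-definite space $(V,(\cdot|\cdot)_H)$ as a unitary representation in which ${\bf 1}$ is a lowest weight vector of weight $0$. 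The submodule it generates is then a nonzero unitary lowest weight module of lowest weight $0$ and central charge $c_\nu$; hence $c_\nu\geq 0$, and if $0<c_\nu<1$ then $c_\nu\in\{c_k:k\geq 1\}$, so in particular $c_\nu\geq\tfrac12$.

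First I would reduce $m$. As $V$ is unitary, $(\cdot|\cdot)_H$ is positive definite on $V_2$, so the preceding Proposition forces $m\leq 3$ when $n=3$ and $m=2$ when $n\geq 4$. Among these, $m=1$ (any $n$) and $(m,n)=(2,3)$ are the cases already understood, so it remains only to rule out the two families $(m,n)=(3,3)$ and $(m,n)=(2,n)$ with $n\geq 4$. For each I would exhibit a Virasoro vector $\nu\in V_2$ with $0<c_\nu<\tfrac12$, contradicting the bound above. When $n=3$ I take $\nu=\omega-\omega^{12}$, the conformal vector of the commutant of $\langle\omega^{12}\rangle$ (this is legitimate since $\omega^{12}\in V_2$ and $L(1)\omega^{12}=0$); its central charge is $c_\nu=c_V-c_m$, which for $m=3$ equals $\tfrac{4}{25}$. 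When $n\geq 4$ I instead localize to four indices: let $W$ be the subalgebra generated by $\{\omega^{ij}:1\leq i<j\leq 4\}$, with conformal vector $\omega_W=\tfrac{8}{2m(m+1)+8}\sum_{1\leq i<j\leq 4}\omega^{ij}$, and take $\nu=\omega_W-\omega^{12}-\omega^{34}$, the conformal vector of the commutant of the commuting pair $\omega^{12},\omega^{34}$ in $W$. A computation with (\ref{e3.1})--(\ref{e3.2}) gives $c_\nu=c_W-2c_m$, which for $m=2$ equals $\tfrac{7}{25}$. Both $\tfrac{4}{25}$ and $\tfrac{7}{25}$ lie in $(0,\tfrac12)$, so neither family can be unitary.

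Thus a unitary $V$ has $m=1$ (any $n$) or $(m,n)=(2,3)$. In the first case Theorem \ref{thm2} identifies $V$ with $M(A_{n-1})$, which is known to be regular and hence $C_2$-cofinite. In the second, Theorem \ref{thm3} gives $V\cong M(A_2)$, and by (\ref{ethm3-2}) $V$ is a finite direct sum of irreducible modules for the conformal subalgebra $L^{(ij)}(1/2,0)\otimes L(7/10,0)$; being a tensor product of minimal models this subalgebra is rational and $C_2$-cofinite, so the finite extension $V$ is $C_2$-cofinite as well. Either way $V$ is $C_2$-cofinite.

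The step I expect to be most delicate is the construction of $\nu$ when $n\geq 4$: the naive choice, namely the commutant of a maximal family of commuting $\omega^{ij}$ inside $V$ itself, has central charge that grows with $n$ and need not fall below $\tfrac12$, so it is essential to localize to the four-index subalgebra $W$, where $c_\nu$ becomes independent of $n$. Verifying that $\omega_W$ is genuinely the conformal vector of $W$ and that $\nu$ is a primary vector with $\nu_3\nu=\tfrac{c_\nu}{2}{\bf 1}$ is a finite but careful Griess-algebra calculation using (\ref{e3.1})--(\ref{e3.2}); the $C_2$-cofiniteness of $M(A_{n-1})$ itself is quoted from the literature.
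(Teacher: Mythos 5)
Your argument is correct in outline but follows a genuinely different route from the paper. The paper never restricts $(m,n)$: it invokes Li's criterion that $D(V)=V$ implies $C_2$-cofiniteness, assumes a vector $v=(v^0,v^1,\dots)\in D(V)\setminus V$ with infinitely many nonzero components, and uses rationality of each $L^{(ij)}(c_m,0)$ to bound the $L^{(ij)}(0)$-spectrum on $D(V)$ by some finite $\lambda^{ij}$; positivity of the Hermitian form then gives $i(v^i|v^i)_H=(L(0)v^i|v^i)_H\leq \mathrm{const}\cdot\sum\lambda^{ij}(v^i|v^i)_H$ uniformly in $i$, a contradiction (the Dong--Wang parafermion argument). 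That proof is uniform in $(m,n)$ and needs nothing beyond Li's $D(V)$ machinery. You instead classify: positivity on $V_2$ (the preceding Proposition) cuts $(m,n)$ down to $m=1$, $(2,3)$, $(3,3)$, $(2,n\geq4)$; you eliminate the last two by exhibiting Virasoro vectors of central charge $4/25$ and $7/25$ and invoking the Friedan--Qiu--Shenker classification; the survivors are identified with $M(A_{n-1})$ and $M(A_2)$ via Theorems \ref{thm2} and \ref{thm3} and quoted to be $C_2$-cofinite. Your computations check out: for $(3,3)$ one has $c_V=24/25$ so the commutant charge is $4/25$, and for $(2,n\geq 4)$ the localized vector $\nu=\omega_W-\omega^{12}-\omega^{34}$ satisfies $\nu_1\nu=2\nu$ and $(\nu|\nu)=7/50$, giving $c_\nu=7/25$; both lie in $(0,1/2)$, and the contravariant positive-definite form on the $\langle\nu\rangle$-submodule generated by $\mathbf{1}$ then contradicts unitarity of $h=0$ lowest-weight modules. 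What each approach buys: yours yields strictly more --- it determines exactly which $(m,n)$ admit a unitary $V$ and identifies $V$ explicitly --- but it imports several external inputs (FQS, Miyamoto's criterion that a Griess idempotent with $L(1)$-annihilation generates a Virasoro subalgebra, needed to certify $\omega_W$ and $\nu$ as Virasoro vectors when $n\geq 4$, and the $C_2$-cofiniteness of $M(A_{n-1})$ for all $n$, which is best justified, as you do for $M(A_2)$, by exhibiting it as a finite extension of a tensor product of minimal models via the iterated GKO coset). The paper's argument is leaner and would survive even if the classification of the unitary cases were unavailable.
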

\begin{proof}
	The idea of our proof comes from \cite{Li99} and  \cite{DW11}. 
 Denote by $\overline{V}$ the completion of $V$, which is the direct product of $V_n$, $n\in\Z_{\geq 0}$. Then each element in $\overline{V}$ has the form:
$$
(v^0,v^1, ... ,v^n, ...), 
$$
where $v_i\in V_i$. $\overline{V}$ is isomorphic to the dual space $V^*$ of $V$ via the non-degenerate bilinear $(\cdot|\cdot)$. Let $t$ be an indeterminate and 
$$
L(V)=V\otimes {\mathbb C}[t,t^{-1}]. 
$$
Then \cite{Bor86}
$$
\widehat{V}=L(V)/DL(V)
$$
is Lie algebra with bracket
$$
[a(p), b(q)]=\sum\limits_{i=0}^{\infty}\left(\begin{array}{c}p\\i\end{array}\right)(a_ib)_{p+q-i},
$$
where $D=\frac{d}{dt}\otimes {\bf 1}+{\bf 1}\otimes L(-1)$ and $v(m)$ is the image of $v\otimes t^m$ in $\widehat{V}$ for $v\in V$ and $m\in\Z$.  Then $\overline{V}$ is a module of the Lie algebra $\widehat{V}$ \cite{FHL93}, \cite{Li99}. Let $D(V)$ be the subspace of $\overline{V}$ consisting of vectors $v$ such that 
$u_nv=0$, for $u\in V$ and $n$ sufficiently large. Then $D(V)$ is a weak $V$-module and $V$ is a submodule of $D(V)$ \cite{Li99}. 
 By \cite{Li99} if $D(V)=V$, then $V$ is $C_2$-cofinite. Suppose that $D(V)\neq V$. Then there exists  $v=(v^0,v^1, ... ,v^n, ...)$ such that $v\notin V$. This implies that there are infinitely many non-zero $v^i$.  For each $\omega^{ij}$, denote by $L^{ij}(c_m,0)$ the Virasoro vertex  subalgebra of $V$ generated by $\omega^{ij}$. Since $L^{ij}(c_m,0)$ is rational and $C_2$-cofinite with non-negative conformal weights, it follows that $D(V)$ is a module of $L^{ij}(c_m,0)$. So $v=(v^0,v^1, ... ,v^n, ...)$ is a direct sum of eigenvectors of $L^{ij}(0)$ with non-negative eigenvalues $ \lambda^{ij}_1, \cdots, \lambda^{ij}_{s_{ij}}$. Notice that $L^{ij}(0)V_n\subseteq V_n$, for each $n\in\Z_{\geq 0}$. So for each $i\in\Z_{\geq 0}$, $v^i$ is a direct sum of eigenvectors of $L^{ij}(0)$ with possible eigenvalues  $\lambda^{ij}_1,\cdots,\lambda^{ij}_{s_{ij}}$. Let $\lambda^{ij}={\rm \max}\{\lambda^{ij}_1,\cdots,\lambda^{ij}_{s_{ij}}\}$. Then 
we have 
$$
(L^{ij}(0)v^i|v^i)_{H}\leq \lambda^{ij}(v^i|v^i)_H,
$$
for all $i\in\Z_{\geq 0}$. Notice that 
$$
\omega=\frac{8}{(n-2)m(m+1)+8}\sum\limits_{1\leq i<j\leq n}\omega^{ij}.
$$
So we have 
$$
\begin{array}{ll}
(L(0)v^i|v^i)_H
=i(v^i|v^i)_H=\frac{8}{(n-2)m(m+1)+8}\sum\limits_{1\leq i<j\leq n}(L^{ij}(0)v^i|v^i)_H\\
\leq \frac{8}{(n-2)m(m+1)+8}\sum\limits_{1\leq i<j\leq n}\lambda^{ij}(v^i|v^i)_H, 
\end{array}
$$
for all $i\in\Z_{\geq 0}$, which is impossible. 
\end{proof}

\end{document}